\definecolor{hot}{RGB}{65,105,225}
\theoremstyle{plain}
\newtheorem{theorem}{Theorem}[section]
\newtheorem{prop}[theorem]{Proposition}
\newtheorem{lm}[theorem]{Lemma}
\newtheorem{cor}[theorem]{Corollary}
\newtheorem{lemma}[theorem]{Lemma}
\newtheorem{thrm}[theorem]{Theorem}
\theoremstyle{definition}
\newtheorem{rmk}[theorem]{Remark}
\newtheorem*{ex*}{Example}
\newcommand\cO{{\mathcal O}}
\newcommand\bQ{{\mathbf{Q}}}
\newcommand\bZ{{\mathbf{Z}}}
\newcommand\bR{{\mathbf{R}}}
\newcommand\bC{{\mathbf{C}}}
\newcommand\bN{{\mathbf{N}}}
\DeclareMathOperator{\homo}{Hom}
\DeclareMathOperator{\spec}{Spec}
\def\ra{\rightarrow}
\def\SL{{\mathrm{SL}}}
\title{On representation zeta functions for special linear groups}
\author{Nero Budur}
\author{Michele Zordan}
\address{KU Leuven, Celestijnenlaan 200B, B-3001 Leuven, Belgium} 
\email{nero.budur@kuleuven.be}
\email{michele.zordan@kuleuven.be}
\begin{document}

\date{}

\begin{abstract}
We prove that the  numbers of irreducible $n$-dimensional  complex continuous representations of the special linear groups over $p$-adic integers grow slower than $n^2$. We deduce that the abscissas of convergence of the representation zeta functions of the special linear groups over the ring of integers are bounded above by $2$. In order to show these results we prove also that if $G$ is a connected, simply connected, semi-simple algebraic group defined over the field of rational numbers, then the $G$-representation variety of the fundamental group of a compact Riemann surface of genus $n$ has rational singularities if and only if the $G$-character variety has rational singularities. 
\end{abstract}
\maketitle

\section{Introduction}
\thispagestyle{empty}
In this note we prove:

\begin{thrm}\label{thrmMain1}
There exists a constant $C$ such that the number of isomorphism classes of continuous irreducible $n$-dimensional complex representations of $\mathrm{SL}_d(\bZ_p)$ is less than $Cn^2$ for any positive integer $d$ and prime number $p$. 
\end{thrm}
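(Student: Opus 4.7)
The plan is to combine the Aizenbud--Avni framework, which relates representation growth of $p$-adic analytic groups to singularities of representation varieties of surface groups, with the equivalence of rational singularities for representation and character varieties stated in the abstract. Concretely, for a semisimple algebraic group $G$, the representation zeta function of $G(\bZ_p)$ is controlled, via the Kirillov orbit method and motivic/Poincar\'e-series techniques, by the singularities of the scheme $X_h(G) := \mathrm{Hom}(\pi_1(\Sigma_h), G)$; rational singularities of $X_h(G)$ yield polynomial bounds on $r_n(G(\bZ_p))$ that are uniform in $p$, with an exponent that decreases in $h$, and the case $h = 2$ produces exponent $2$, which is exactly what is needed.

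Since representation schemes are typically less tractable than their GIT quotients, I would use the auxiliary theorem stated in the abstract to transfer between the two: for $G = \mathrm{SL}_d$, which is connected, simply connected and semisimple over $\bQ$, $X_h(G)$ has rational singularities if and only if the character variety $Y_h(G) = X_h(G)/\!/G$ does. Character varieties of surface groups for $\mathrm{SL}_d$ are comparatively well studied via their Goldman symplectic and hyper-K\"ahler structures, and their rational singularities can be established using symplectic-singularity techniques (Beauville, Boutot) together with standard results on GIT quotients; this is where the abstract's main technical equivalence is used decisively, since one verifies rationality on the simpler side and automatically obtains it on the side needed as input to Aizenbud--Avni.

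Assembling the above would give $r_n(\mathrm{SL}_d(\bZ_p)) \leq f(d) \cdot n^2$ with $p$-uniformity built in (rational singularities in characteristic $0$ propagate to almost all primes, and finitely many bad primes contribute a bounded correction), so the main remaining obstacle is to show that $f(d)$ is bounded independently of $d$. A direct application of the Aizenbud--Avni estimates produces a constant that depends on discrete invariants of $X_2(\mathrm{SL}_d)$ (dimension, number and multiplicities of components, regularity of certain strata), and one must argue that these invariants stabilize, or grow in a sufficiently controlled way, as $d \to \infty$. I expect this uniformity in $d$ to be the hardest part of the proof: it will likely require a structural analysis of $X_2(\mathrm{SL}_d)$ via the moment-map description of $Y_2(\mathrm{SL}_d)$, or by exploiting the fact that $n$-dimensional irreducible continuous representations of $\mathrm{SL}_d(\bZ_p)$ become rare for $n$ small compared to $d$, so that only representations whose dimension scales with $d$ actually contribute to the count.
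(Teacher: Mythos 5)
Your high-level architecture coincides with the paper's: get rational singularities of the character variety from its symplectic nature and feed a genus-$2$ statement into the Aizenbud--Avni machinery to reach the exponent $2\cdot 2-2=2$. But there is a genuine gap at the step you yourself call decisive: you invoke the equivalence stated in the abstract (Proposition \ref{prop:semisimple}) as an available transfer principle from the character variety to the representation variety. In the paper that direction of the equivalence is an \emph{output}, not an input: the ``only if'' direction is Boutot \cite{Bo}, but there is no standard GIT result in the direction you need (rational singularities of $X\sslash G$ do not imply rational singularities of $X$ in general), so ``standard results on GIT quotients'' cannot supply it. The paper's actual route is analytic: starting from Bellamy--Schedler's theorem \cite{BS} that $M(n,\SL_d)_\bC$ has symplectic, hence (by Beauville \cite{B}) rational Gorenstein, singularities --- a nontrivial cited theorem, not something recovered by soft symplectic/hyper-K\"ahler arguments --- one descends the singularity statement and the Atiyah--Bott--Goldman form to $\bQ$ (Lemma \ref{lemQ}), uses \cite[Corollary 3.10]{AA} to see that the top form defines a Radon measure on $M(n,G)^{sm}(K)$, and then proves that the image of the \'etale map $q: R(n,\Gamma)^o/\Gamma \ra M(n,G)^{sm}(K)$ lies in a compact set and has fibres of uniformly bounded cardinality (the uniform pro-$p$/normalizer analysis of Section \ref{sec:bounded_fibres}). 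This gives finiteness of the volume, hence via \cite[Theorem VI]{AA} convergence of $\zeta_\Gamma(2n-2)$ and $\alpha_\Gamma<2$; only then do \cite[Theorem IV]{AA} and this bound yield rational singularities of $\homo(\pi_1(\Sigma_n),\SL_d)$ and the ``if'' direction of Proposition \ref{prop:semisimple}. A blind proof must supply this chain (or a substitute); quoting the proposition assumes precisely the paper's main technical content.

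A second, smaller point: you locate the principal remaining difficulty in the uniformity of the constant in $d$. That is misplaced. Once rational singularities of $\homo(\pi_1(\Sigma_2),\SL_d)$ is known for every $d$, the uniformity in $d$ and $p$ is exactly what the Aizenbud--Avni point-counting results already deliver --- this is how they obtained the uniform $Cn^{22}$ bound from genus $12$ --- so no stabilization analysis of $X_2(\SL_d)$ as $d\to\infty$ is required. (Also, the exponent $2h-2$ grows with the genus $h$, so one wants the smallest admissible genus; your phrase ``decreases in $h$'' is a slip, though you use $h=2$ correctly.) The real work is the genus-$2$ convergence/rational-singularities input, i.e.\ Theorem \ref{mainTec} and Lemma \ref{lemBDD}, not the final assembly.
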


Larsen and Lubotzky conjectured in \cite[Section 11]{LL} the existence of universal upper bounds. This was then shown to be at least $Cn^{22}$ by Aizenbud and Avni in \cite{AA}. The present paper improves this latter result.

For a topological group $\Gamma$, let $r_n(\Gamma)$ be the number of isomorphism classes of continuous irreducible $n$-dimensional complex representations. Assuming that this number is finite  for every $n$, one defines the {\it representation zeta function} of $\Gamma$ to be the Dirichlet generating function
$$
\zeta_{\Gamma}(s) = \sum_{n\ge 1} r_n(\Gamma)n^{-s}.
$$
It is known that if such functions converge then they converge in a open complex half-plane. The smallest $s_0\in\bR$ such that $\zeta_{\Gamma}(s)$ converges for complex numbers $s$ with real part $Re(s) > s_0$ is called the {\em abscissa of convergence} of $\zeta_\Gamma(s)$ and denoted by $\alpha_\Gamma$. If $\zeta_{\Gamma}(s)$ does not converge, we say that $\alpha_\Gamma = \infty$. 

Let $K$ be a non-archimedean local field containing $\bQ$. By Jaikin-Zapirain \cite{J}, if $\Lambda$ is a compact open subgroup of $\SL_d(K)$ which is also a uniform pro-$p$ group, then $\zeta_{\Lambda}(s)$ is a rational function in $p^{-s}$ whose denominator is a product of binomials of the form $1-p^{a-bs}$ for some integers $a, b$. This implies that $\alpha_\Lambda$ is finite and rational.

 Since every compact open  subgroup $\Gamma$ of $\SL_d(K)$ contains a finite index subgroup like $\Lambda$ above, also $\alpha_\Gamma$ is finite and rational (and equal to $\alpha_\Lambda$ by \cite[Lemma 2.2]{LM}). Moreover, if
\[
	R_N(\Gamma) = \sum_{n = 1}^{N} r_n(\Gamma)
\]
for $N\in \bN$, then $\log (R_N(\Gamma))/ \log N$ tends to $\alpha_\Gamma$ as $N$ tends to infinity. Theorem \ref{thrmMain1} is then true for any compact open subgroup $\Gamma$ of $\SL_d(K)$, and it is  a consequence of the following more general theorem:
\begin{thrm}
\label{thrmMain}
Let $d$ be a positive integer. Let $K$ be a non-archimedean local field containing $\bQ$. Let $\Gamma$ be a compact open subgroup of $\mathrm{SL}_d(K)$. Then $\alpha_\Gamma < 2$.
\end{thrm}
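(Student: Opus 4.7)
The plan is to apply the framework of Aizenbud and Avni \cite{AA}, which translates bounds on $\alpha_\Gamma$ into questions about singularities of $\SL_d$-representation varieties of closed surface groups. First I would reduce to a finite-index uniform pro-$p$ subgroup $\Lambda\le\SL_d(\cO_K)$ (a sufficiently deep congruence subgroup); by \cite[Lemma~2.2]{LM} we have $\alpha_\Gamma=\alpha_\Lambda$, so it suffices to show $\alpha_\Lambda<2$. The Kirillov orbit method expresses $r_n(\Lambda)$ in terms of coadjoint $\Lambda$-orbits, and this packages $\zeta_\Lambda$ as a $p$-adic integral whose convergence at $s=2h-2$ is essentially equivalent to the finiteness of the Haar volume of $\homo(\pi_1(\Sigma_h),\Lambda)$.

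By $p$-adic integration (Denef-Loeser/Igusa), the volume of $\homo(\pi_1(\Sigma_h),\Lambda)$ is controlled by the singularities of the scheme $X_h:=\homo(\pi_1(\Sigma_h),\SL_d)_{\bZ_p}$. Concretely, if $X_h$ has rational singularities then the associated Igusa-type local zeta function has no poles beyond those forced by the dimension $\dim X_h = (2h-1)(d^2-1)$, and one obtains an explicit upper bound of the form $\alpha_\Lambda<c(h,d)$ with $c(h,d)<2$ once $h$ is in the right range. In \cite{AA} only a weaker singularity estimate was available, which produced the bound $\alpha_\Lambda<22$; sharpening this to $<2$ thus boils down to establishing rational singularities of $X_h$ itself.

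The crucial geometric input is the equivalence announced in the abstract: for any connected, simply connected, semisimple $G/\bQ$, the representation variety $X_h$ has rational singularities if and only if the character variety $Y_h:=X_h/\!\!/G$ does. With this equivalence in hand, I would invoke the symplectic structure (Goldman form) on the smooth locus of $Y_h$ together with the codimension estimates for the reducible locus to show that for $G=\SL_d$ and $h$ in the required range $Y_h$ has symplectic, hence rational, singularities; by the equivalence so does $X_h$, and then the quantitative Aizenbud-Avni estimate gives $\alpha_\Lambda<2$.

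The main obstacle is the rational-singularities equivalence between $X_h$ and $Y_h$. Descent of rational singularities from $X_h$ to the GIT quotient $Y_h$ is essentially Boutot's theorem, but the converse ascent is much more delicate: the quotient map has positive-dimensional fibres and non-reduced stabiliser behaviour along the reducible locus, so one cannot just invert Boutot. Handling this demands a slice-theoretic analysis around reducible representations, using the semisimplicity and simple-connectedness of $G$ in an essential way. I expect this to be the technical heart of the paper; once it is in place, Theorem~\ref{thrmMain} follows by feeding it into the Aizenbud-Avni machinery and choosing $h$ large enough.
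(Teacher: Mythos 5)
Your proposal has the logical architecture of the paper reversed, and this creates a genuine gap. You treat the equivalence ``character variety has rational singularities $\iff$ representation variety has rational singularities'' (Proposition \ref{prop:semisimple}) as an \emph{input} to the abscissa bound, to be proved directly by a slice-theoretic analysis around reducible representations; but you give no argument for the ascent direction, and no such direct argument is known (if it were, Aizenbud--Avni could have concluded $\alpha_\Gamma<2$ themselves). In the paper the dependency runs the other way: the new idea is to bypass the representation variety entirely. One shows that the character variety $M(n,\SL_d)$ has rational singularities over $\bQ$ (Bellamy--Schedler symplectic singularities over $\bC$, Beauville, then descent of normality/rationality/Gorensteinness to $\bQ$ as in Lemma \ref{lemQ}), so that by \cite[Corollary 3.10]{AA} the top power of the Goldman form defines a \emph{Radon} measure on $M(n,G)(K)$; then one proves finiteness of $\int_{R(n,\Gamma)^o/\Gamma}|v_\Gamma|$ by noting that $q\colon R(n,\Gamma)^o/\Gamma\to M(n,G)^{sm}(K)$ pulls back $v_G$, has uniformly bounded fibres (Lemma \ref{lemBDD}, the normalizer/metric argument for uniform pro-$p$ groups), and has image inside the compact set $Q(\Phi^{-1}(1)\cap\Gamma^{\times 2n})$. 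Theorem \ref{thrmVI} then converts this finite volume into convergence of $\zeta_\Gamma$ at $2n-2$, and only afterwards does the rational singularity of $\homo(\pi_1(\Sigma_n),\SL_d)$ (Corollary \ref{corRat}) and the ``if'' direction of Proposition \ref{prop:semisimple} fall out via \cite[Theorem IV]{AA}. None of this character-variety-side machinery appears in your proposal, and without it (or a genuinely new proof of the ascent) your plan does not close.

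There is also a quantitative error: you claim that rational singularities of $X_h$ yield $\alpha_\Lambda<c(h,d)<2$ ``once $h$ is in the right range'' and propose to ``choose $h$ large enough.'' The Aizenbud--Avni correspondence ties genus $h$ to the single value $s=2h-2$: finiteness of the relevant integral at genus $h$ gives convergence at $2h-2$, so larger genus gives \emph{weaker} bounds (this is exactly how \cite{AA} got $22$ from genus $\geq 12$), and the bound $<2$ requires precisely the genus-$2$ case --- the case in which rational singularities of the representation variety were unknown and are obtained here only as a corollary of the theorem you are trying to prove. Your reduction to a uniform pro-$p$ subgroup via \cite[Lemma 2.2]{LM} does match the paper, but the core of the argument is missing.
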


In \cite{AA}, the bound $\alpha_\Gamma<22$ had been obtained with $\Gamma$ as in the theorem, along with other bounds for compact open subgroups for all semi-simple algebraic groups defined over ${\bQ}_p$. Our result complements a rather short list of known results besides \cite{AA}. When $\Gamma=\SL_d(\bZ_p)$,
$$
\alpha_{\Gamma}\text{ is }\left\{
						\begin{array}{lll}
							\ge 1/15 & & \text{by \cite{LL},}\\
							=1 & \text{ if } d=2 & \text{by \cite{J} and \cite{AKOV12}},\\
							=2/3 & \text { if }d=3  \text{ and } p > 3 & \text{by \cite{AKOV13}},\\
							=1/2 & \text { if }d=4 \text{ and } p > 2 & \text{by \cite{Z}.}
\end{array}\right.
$$

%
The proof of Theorem \ref{thrmMain} is based on Aizenbud-Avni \cite{AA}, together with the additional observation by Bellamy-Schedler \cite{BS} that the  $\SL_d$-character variety of a compact Riemann surface admits algebraic symplectic singularities. The new idea of this paper is that it is possible to use the machinery of \cite{AA} to deduce upper-bounds for the abscissa of convergence by looking at the singularities of the $\SL_d$-character variety rather than at those of the $\SL_d$-representation variety. As we will see in the next section, the methods used by \cite{AA} are only capable of computing the values of the representation zeta function at even natural numbers, therefore the present result is the best possible upper-bound obtainable with their techniques.

Theorem \ref{thrmMain} has the following notable corollaries as pointed out by Aizenbud-Avni. 

\begin{cor}\label{corRat} 
 Let $\pi_1(\Sigma_n)$ be the fundamental group at a fixed base point of a compact Riemann surface of genus $n$. Then the representation variety $$\homo (\pi_1(\Sigma_n),\SL_d)$$
has rational singularities over $\bQ$  for $n\ge 2$.
\end{cor}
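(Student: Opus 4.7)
The plan is to reduce the statement for the representation variety to an analogous statement for the character variety, and then invoke the fact that the $\SL_d$-character variety of a compact Riemann surface has algebraic symplectic singularities. The abstract already announces the key technical result that for $G$ connected, simply connected, semi-simple over $\bQ$, the $G$-representation variety $\homo(\pi_1(\Sigma_n),G)$ has rational singularities if and only if the $G$-character variety $\homo(\pi_1(\Sigma_n),G)/\!/G$ does. Taking $G=\SL_d$, which satisfies these hypotheses, it therefore suffices to show that the $\SL_d$-character variety of $\Sigma_n$ has rational singularities for $n\ge 2$.

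For this, I would invoke Bellamy--Schedler \cite{BS}, who show (for $n\ge 2$) that the $\SL_d$-character variety of a compact Riemann surface of genus $n$ admits algebraic symplectic singularities in the sense of Beauville. The result of Beauville then gives that any variety with symplectic singularities has rational singularities, since by definition a symplectic form on the smooth locus extends to a (regular) form on any resolution, which is one of the equivalent characterizations of rationality in this setting. Combined with the equivalence from the previous paragraph, this yields the desired conclusion for $\homo(\pi_1(\Sigma_n),\SL_d)$.

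The main obstacle is not the final chain of implications, which is essentially a matter of assembling \cite{BS}, Beauville's theorem, and the representation-vs-character equivalence, but rather establishing that equivalence itself. A naive attempt to transfer rationality of singularities across the GIT quotient $\homo(\pi_1(\Sigma_n),\SL_d) \to \homo(\pi_1(\Sigma_n),\SL_d)/\!/\SL_d$ is delicate because the $\SL_d$-action is not free, the geometric quotient differs from the GIT quotient on the non-stable locus, and rationality of singularities is not in general preserved under categorical quotients by non-finite reductive groups. One would need a carefully chosen local model, perhaps via a Luna-type slice argument combined with the fact that $\pi_1(\Sigma_n)$ is a surface group so that the moment-map description makes the fibers well-behaved; this is presumably where the condition $n\ge 2$ (ensuring sufficiently generic stable representations and a nonempty smooth locus with a symplectic form) is used. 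Once that equivalence is in place, the corollary follows immediately.
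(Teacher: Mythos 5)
There is a genuine gap, and it is essentially a circularity. You reduce the Corollary to the equivalence ``$\homo(\pi_1(\Sigma_n),G)$ has rational singularities iff the character variety does'' (Proposition \ref{prop:semisimple}), and then acknowledge that the hard direction of that equivalence (ascending from the character variety to the representation variety) is the real difficulty, proposing to handle it by a Luna-slice/local-model argument across the GIT quotient. But in the paper that direction of Proposition \ref{prop:semisimple} is not an independent input: it is proved \emph{by the same argument as Corollary \ref{corRat} itself}, so you cannot invoke it to prove the Corollary. Moreover, no direct algebro-geometric transfer across $R(n,G)\to R(n,G)\sslash G$ is attempted in the paper (only Boutot's descent, which goes the wrong way for your purposes), and your sketch of a slice argument is speculative rather than a proof.

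The actual bridge is analytic, through Aizenbud--Avni. From Bellamy--Schedler and Beauville one gets (after the descent-to-$\bQ$ step of Lemma \ref{lemQ}, which your sketch also skips) that $M(n,\SL_d)$ has rational singularities over $\bQ$, hence over any non-archimedean local field $K\supset\bQ$. By \cite[Corollary 3.10]{AA} the top form $v_G$ built from the Atiyah--Bott--Goldman $2$-form then defines a Radon measure on $M(n,\SL_d)(K)$. Pushing the integral of $v_\Gamma$ along the \'etale map $q:R(n,\Gamma)^o/\Gamma\to M(n,\SL_d)^{sm}(K)$ — which requires the bounded-fibre statement of Lemma \ref{lemBDD} and the compactness of $Q(\Phi^{-1}(1)\cap\Gamma^{\times 2n})$ — gives finiteness of $\int_{R(n,\Gamma)^o/\Gamma}|v_\Gamma|$ (Theorem \ref{mainTec}). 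By \cite[Theorem VI]{AA} (Theorem \ref{thrmVI}) this finiteness equals, up to a constant, $\zeta_\Gamma(2n-2)<\infty$, so $\alpha_\Gamma<2n-2$ for every compact open $\Gamma\le \SL_d(K)$ and every such $K$. Finally, \cite[Theorem IV]{AA} converts this uniform abscissa bound into rational singularities of $\homo(\pi_1(\Sigma_n),\SL_d)$. Without this representation-growth mechanism (or a genuinely new proof of the ascent of rational singularities along the quotient), your argument does not close.
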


This follows directly from Theorem \ref{thrmMain} together with \cite[Theorem IV]{AA}. The statement for the cases $n\ge 12$ was shown in \cite[Theorem VIII]{AA}.

\begin{cor} 
Let $d$ be a positive integer. Then there exists a finite set $S$ of prime numbers such that for every 
global field $k$ of characteristic not in $S$ and every finite set $T$ of places of $k$ containing all archimedean ones,  
$$\alpha (\SL_d(O_{k,T}))\le 2,$$
where 
$$O_{k,T}=\{x\in k\mid \forall v\not\in T, \|x\|_v\le 1\},$$
with the exception of the case $d=2$ and $k$ equal to $\bQ$ or an imaginary quadratic extension of $\bQ$.
In particular, $\alpha (\SL_d(\bZ))\le 2$ for $d\ge 3$.
\end{cor}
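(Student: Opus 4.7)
The strategy I would use is to reduce the $S$-arithmetic problem to the local one via the congruence subgroup property (CSP) and the representation-zeta decomposition of Larsen--Lubotzky \cite{LL}, then apply Theorem~\ref{thrmMain} at each non-archimedean place. Set $\Gamma:=\SL_d(O_{k,T})$. The first step would be to invoke the classical CSP theorems of Bass--Lazard--Serre, Bass--Milnor--Serre and Matsumoto (for $d\geq 3$) and of Serre (for $d=2$ with $O_{k,T}^\times$ infinite); after possibly enlarging $S$ by the finitely many bad characteristics these theorems require, $\Gamma$ satisfies CSP in every case not excluded by the statement, and the excluded cases $d=2$, $k\in\{\bQ,\text{imag.\ quad.}\}$ are exactly those where $\SL_2(O_{k,T})$ fails CSP. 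Strong approximation then identifies $\widehat\Gamma$, up to a finite central kernel, with $\prod_{v\notin T}\SL_d(\cO_v)$, taken over the non-archimedean places of $k$ outside $T$.

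Next, I would invoke Larsen--Lubotzky to decompose every continuous finite-dimensional representation of $\Gamma$ as a tensor product of an algebraic representation of $\SL_d$ at the archimedean places with an external tensor product of finitely many irreducibles of the $\SL_d(\cO_v)$'s. Passing to the representation zeta function, this gives a multiplicative decomposition $\zeta_\Gamma(s)\sim\zeta_{\mathrm{arch}}(s)\cdot\prod_{v\notin T}\zeta_{\SL_d(\cO_v)}(s)$, so that
$$
\alpha_\Gamma\;\leq\;\max\Big\{\alpha_{\mathrm{arch}},\;\sup_{v\notin T}\alpha_{\SL_d(\cO_v)}\Big\}.
$$
Weyl's dimension formula identifies the archimedean abscissa as $\alpha_{\mathrm{arch}}=(d-1)/\binom{d}{2}=2/d<2$: irreducibles of $\SL_d$ are indexed by dominant weights $\lambda\in\bZ_{\geq 0}^{d-1}$ with $\dim V_\lambda$ polynomial of degree $\binom{d}{2}$ in $\lambda$. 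For each non-archimedean place, $\SL_d(\cO_v)$ is a compact open subgroup of $\SL_d(k_v)$, and since $k_v$ is a non-archimedean local field containing $\bQ$ (once $\mathrm{char}(k)\notin S$), Theorem~\ref{thrmMain} gives $\alpha_{\SL_d(\cO_v)}<2$. Combining yields $\alpha_\Gamma\leq 2$.

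The main obstacle will be the passage from the case-by-case strict inequality $\alpha_v<2$ to a uniform bound that controls the supremum over infinitely many places, and in particular ensuring convergence of the infinite product of local zeta functions for $s>2$. The non-strict conclusion $\sup_v\alpha_v\leq 2$ matches what the statement asks for, and convergence of the infinite product is a standard feature of compact $p$-adic analytic groups: the minimal non-trivial irreducible representation of $\SL_d(\bF_{q_v})$ has dimension bounded below by a power of $q_v$, so $\zeta_{\SL_d(\cO_v)}(s)-1$ decays as $q_v$ grows. Finally, the in-particular statement $\alpha(\SL_d(\bZ))\leq 2$ for $d\geq 3$ falls out as the special case $k=\bQ$, $T=\{\infty\}$.
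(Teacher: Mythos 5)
Your skeleton (CSP plus strong approximation, the Larsen--Lubotzky factorization of $\zeta_\Gamma$ into an archimedean Witten factor and an Euler product of local factors, and then the local bound of Theorem~\ref{thrmMain} at each place) is the right general framework, but it has a genuine gap exactly at the step you flag: passing from $\alpha_{\SL_d(\cO_v)}<2$ place by place to convergence of the infinite product $\prod_{v}\zeta_{\SL_d(\cO_v)}(s)$ for $\mathrm{Re}(s)>2$. Knowing each local abscissa is $<2$ gives no uniformity in $v$, and the abscissa of an Euler product can exceed the supremum of the local abscissas. Your proposed fix --- that the minimal nontrivial irreducible of $\SL_d(\bF_{q_v})$ has dimension growing with $q_v$ --- only controls the level-one representations, i.e.\ those factoring through $\SL_d(\cO_v/\mathfrak{p}_v)$; it says nothing about the infinitely many higher-level irreducibles of $\SL_d(\cO_v)$, whose contribution to $\zeta_v(s)-1$ must be bounded uniformly in $v$ and in the level. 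Supplying this uniformity is precisely the content of \cite[Theorem II]{AA2}, and its input is not the local abscissa bounds but a geometric statement: uniform point counts of the scheme $\homo(\pi_1(\Sigma_2),\SL_d)$ over the finite rings $\cO_v/\mathfrak{p}_v^k$, which Aizenbud--Avni derive from rational singularities of that scheme. Accordingly, the paper's proof is short: Corollary~\ref{corRat} (rational singularities of $\homo(\pi_1(\Sigma_n),\SL_d)$, itself deduced from Theorem~\ref{thrmMain} via \cite[Theorem IV]{AA}) is fed into \cite[Theorem II]{AA2}, with the CSP hypothesis supplied by \cite{BMS} outside the excluded cases $d=2$, $k=\bQ$ or imaginary quadratic.

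A second concrete problem: the statement allows positive-characteristic global fields $k$ (that is the point of the finite exceptional set $S$ of characteristics). For such $k$ the completions $k_v$ have positive characteristic and do not contain $\bQ$, so Theorem~\ref{thrmMain} cannot be applied at those places at all; your parenthetical ``once $\mathrm{char}(k)\notin S$'' does not repair this, since removing finitely many characteristics does not make $k_v$ a characteristic-zero field. In the paper's route this is immaterial, because the rational-singularities input over $\bQ$ spreads out over $\bZ[1/N]$ and yields point-count bounds over finite rings of all but finitely many residue characteristics --- that is where $S$ actually comes from. So your argument, even with the uniformity issue resolved, would only prove the number-field case of the corollary.
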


This follows directly from Corollary \ref{corRat} together with \cite[Theorem II]{AA2}, where a condition was phrased in terms of the congruence subgroup property for $\SL_d(O_{k,T})$. This property holds except  for $d=2$ and $k$ equal to $\bQ$ or an imaginary quadratic extension of $\bQ$ by \cite{BMS}, hence the statement of the above corollary. The statement of the corollary with $\alpha\le 22$ was shown in \cite{AA2}. Here $\alpha (\SL_d(\bZ))$ is defined to be the abscissa of convergence of the representation zeta function of the group $\SL_d(\bZ)$, which in this case enumerates all irreducible complex representations and not only the continuous ones.

Our argument for Theorem \ref{thrmMain}  extends to other semi-simple algebraic groups if the associated character variety  has rational singularities, that is, if the result of \cite{BS}  generalizes.  In this case, Corollary \ref{corRat} also generalizes to:

\begin{prop}
\label{prop:semisimple} 
Let $G$ be a connected, simply connected, semi-simple algebraic group defined over $\bQ$. Let $n\ge 2$. Then  the representation variety $$\homo (\pi_1(\Sigma_n),G)$$
has rational singularities  if and only if  the character variety
$$
\homo (\pi_1(\Sigma_n),G)\sslash G
$$
has rational singularities.
\end{prop}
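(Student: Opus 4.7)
Set $R := \homo(\pi_1(\Sigma_n),G)$ and $X := R\sslash G$. Since $G$ is semisimple and hence reductive, one implication is immediate from Boutot's theorem on GIT quotients by reductive groups: if $R$ has rational singularities, so does $X$. The substance of the proposition is the converse, and this is where my plan focuses.

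I would reduce the converse to a purely local question via Luna's étale slice theorem. Let $\bar\rho \in X$ and lift it to a point $\rho \in R$ whose $G$-orbit is closed, equivalently to a polystable representation; its stabilizer $H := Z_G(\rho)$ is then a reductive subgroup of $G$. Luna's theorem furnishes an $H$-stable slice $N \subseteq R$ through $\rho$ such that the canonical morphisms
\[
G \times^H N \;\longrightarrow\; R, \qquad N \sslash H \;\longrightarrow\; X
\]
are étale near the distinguished origins. Because $G\times^H N \to G/H$ is a Zariski-locally trivial bundle with fiber $N$ over the smooth homogeneous space $G/H$, the total space $G\times^H N$ has rational singularities if and only if $N$ does. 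Combining this with the étale-local nature of rational singularities, the proposition is equivalent to the local assertion that $N$ has rational singularities at the origin if and only if $N\sslash H$ has rational singularities at its image.

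To make $N$ explicit I would use the Weil--Goldman deformation-theoretic model for the slice. Near $\rho$, the slice is formally, and in fact étale-locally, isomorphic to the zero fiber $\mu^{-1}(0)$ of a quadratic moment map $\mu : V \to \mathfrak{h}^*$. Here $V := H^1(\pi_1(\Sigma_n), \mathfrak{g}_\rho)$ carries the Goldman symplectic form, obtained by combining the Killing form on $\mathfrak{g}$ with Poincaré duality on $\Sigma_n$; the induced $H$-action on $V$ is linear and Hamiltonian; and $\mu$ is the quadratic map arising from the cup product $V\times V \to H^2(\pi_1(\Sigma_n),\mathfrak{g}_\rho\otimes\mathfrak{g}_\rho) \to \mathfrak{h}^*$. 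The use of $n\ge 2$ enters here via the existence of sufficiently many irreducible representations to control the generic fiber structure. The proposition is thereby reduced to the following local symplectic statement: for any linear Hamiltonian action of a reductive group $H$ on a symplectic vector space $V$ with quadratic moment map $\mu$, the variety $\mu^{-1}(0)$ has rational singularities at $0$ if and only if $\mu^{-1}(0)\sslash H$ does.

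The main obstacle is this local statement, since the ``reverse Boutot'' direction fails for general reductive group actions and can succeed here only because of the symplectic structure. The strategy is to invoke the Beauville--Namikawa theory of symplectic singularities: assuming $\mu^{-1}(0)\sslash H$ has rational singularities, the Poisson structure inherited from the symplectic $V$ promotes it to a symplectic singularity, and a Kirwan-type partial symplectic resolution of $\mu^{-1}(0)\sslash H$ (obtained by successive blow-ups of the loci of deepest stabilizer, i.e.\ the most reducible representations) can be lifted along the quotient map $\mu^{-1}(0) \to \mu^{-1}(0)\sslash H$ to a resolution of $\mu^{-1}(0)$ with controlled fibers. Verifying that this lifted morphism is genuinely a resolution and that the higher direct images of its structure sheaf vanish is the cohomological heart of the argument, and is where I expect the real technical difficulty to lie.
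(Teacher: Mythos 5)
The Boutot half of your argument is fine and is exactly what the paper does. The gap is in the converse, which is the entire content of the proposition. After the Luna slice / Goldman--Millson reduction you arrive at the assertion that for a linear Hamiltonian action of a reductive group $H$ on a symplectic vector space with quadratic moment map $\mu$, rational singularities of $\mu^{-1}(0)\sslash H$ imply rational singularities of $\mu^{-1}(0)$ --- and you do not prove this; you only propose to ``lift'' a Kirwan-type partial resolution of $\mu^{-1}(0)\sslash H$ along the quotient map $\mu^{-1}(0)\to\mu^{-1}(0)\sslash H$. No such lifting mechanism exists: the quotient map is neither flat nor \'etale, its fibres are $H$-orbits of jumping dimension, and a proper birational modification of the quotient does not induce, by pullback or otherwise, a resolution of the zero fibre (Kirwan desingularization modifies the GIT quotient, not $\mu^{-1}(0)$). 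Moreover ``reverse Boutot'' is false for general reductive quotients, and there is no Beauville--Namikawa-type theorem producing rational singularities upstairs from rational (or even symplectic) singularities of the reduction. In effect you have replaced the proposition by a local statement that is at least as hard, of unclear truth in the generality you state it (your reduction also essentially discards the hypothesis $n\ge 2$, which cannot be dispensed with), so the cohomological ``heart'' you defer is precisely the missing proof.

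For comparison, the paper's proof of the ``if'' direction is not local or algebro-geometric at all: it is the arithmetic argument of Theorem~\ref{mainTec}. Rational singularities of the character variety $M(n,G)=\homo(\pi_1(\Sigma_n),G)\sslash G$, after base change to any non-archimedean local field $K\supset\bQ$, make $|v_G|$ a Radon measure on $M(n,G)^{sm}(K)$ (Corollary~\ref{cor:Radon}); compactness of the image of $R(n,\Gamma)^o/\Gamma$ together with the bounded-fibre Lemma~\ref{lemBDD} then give $\int_{R(n,\Gamma)^o/\Gamma}|v_\Gamma|<\infty$ for every compact open $\Gamma\le G(K)$; Theorem~\ref{thrmVI} converts this finiteness into convergence of $\zeta_\Gamma(2n-2)$, hence $\alpha_\Gamma<2n-2$ uniformly over all such $K$ and $\Gamma$; and finally \cite[Theorem IV]{AA} translates this representation-growth bound back into rational singularities of $\homo(\pi_1(\Sigma_n),G)$. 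If you wish to pursue your outline instead, you must actually prove the local symplectic statement, for which no proof is currently offered or known to me.
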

See Section \ref{secPf} for the definitions. The ``only if" direction is due to Boutot \cite{Bo}. The ``if" direction follows as in the proof of Corollary \ref{corRat}, by noting that the only new thing we have used is the fact that the $\SL_d$-character variety has rational singularities over $\bQ$. If $n\ge 374$, both spaces in the corollary admit rational singularities regardless of $G$, by \cite[Theorem B]{AA}. Hence the corollary could be a step to get rid of this uniform lower bound.

Throughout the article, for a scheme $X$ and an algebra $A$, we denote by $X(A)$ the set of morphisms of $\bZ$-schemes $\spec(A)\ra X$.  A variety over a field $k$ is a $k$-scheme of finite type that is separated, reduced, and irreducible.
 
\medskip
 
 {\it Acknowledgement.} The authors thank Benjamin Martin for his comments on an earlier version of this paper. The second author would like to thank Avraham Aizenbud and Nir Avni for answering a question on their techniques to pass from a bound for the local abscissa to a bound for the global abscissa. The authors also thank the referees for very important comments. The first author was partly sponsored by a research grant STRT/1 3/005 and a Methusalem grant METH/15/026 from KU Leuven,  and by the research projects \allowbreak G0B2115N and G0F4216N from the Research Foundation - Flanders (FWO). The second author is currently supported by the research project G.0792.18N of FWO.

\section{Reduction to finite volumes}


Let $G$ be an algebraic group defined over $\bZ$. Let $K$ be a non-archimedean local field containing $\bQ$. The set of $K$-rational points $G(K)$ of $G$ is naturally a $K$-analytic manifold, since in characteristic zero every algebraic group is smooth.

Let $\Sigma_n$ be a compact Riemann surface of genus $n$, and let $\pi_1(\Sigma_n)$ be the fundamental group of $\Sigma_n$ based at a fixed point. For a compact open subgroup $\Gamma$ of the $K$-analytic group $G(K)$, define
$$
R(n,\Gamma)^o:=\{\rho\in \homo (\pi_1(\Sigma_n), \Gamma)\mid \text{ the closure of the image of }\rho\text{ is open}\}.
$$
In this case, the quotient $$R(n,\Gamma)^o/\Gamma$$ by the conjugation action admits the structure of a  $K$-analytic  manifold, together with a specific volume form $v_\Gamma$ by \cite[Section 4.3]{AA}.  We will use the following fascinating relation between the volume of $R(n,\Gamma)^o/\Gamma$ and values of $\zeta_\Gamma(s)$ due to Aizenbud-Avni:

\begin{thrm}\label{thrmVI}{\rm{(}\cite[Theorem VI]{AA}\rm{)} }
Let $G$ be a connected, simply connected, semi-simple algebraic group over $\bQ$. Let $K$ be a non-archimedean local field containing $\bQ$. Let $\Gamma$ be a compact  open subgroup of $G(K)$. Then there exists a non-zero constant $c_{G,\Gamma}$ such that
$$
\int_{R(n,\Gamma)^o/\Gamma} |v_\Gamma | = c_{G,\Gamma}\cdot \zeta_\Gamma (2n-2)
$$
for $n\ge 2$, if any the two sides of the equation is finite. In other words, if one side converges, the other side converges too, and equals the value prescribed by the equation.
\end{thrm}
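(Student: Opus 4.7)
The plan is to establish Theorem \ref{thrmVI} as a $p$-adic analytic avatar of Mednykh's formula. Classically, for a finite group $H$ one has
\[
|\homo(\pi_1(\Sigma_n),H)| \;=\; |H|^{2n-1}\sum_\chi\chi(1)^{2-2n},
\]
with $\chi$ ranging over irreducible characters; after dividing by $|H|$ to quotient by the conjugation action (the generic stabilizer on the irreducible locus being the centre) the right-hand side becomes $|H|^{2n-2}\sum_\chi\chi(1)^{2-2n}$. This is the finite-group shadow of the identity we want: the sum $\sum_\pi(\dim\pi)^{2-2n}$ over irreducible continuous representations of $\Gamma$ is by definition $\zeta_\Gamma(2n-2)$, and the $|H|^{2n-2}$ prefactor becomes a Haar-measure factor $\mu(\Gamma)^{2n-2}$ that must be absorbed into the normalization of $v_\Gamma$ so that the remaining constant $c_{G,\Gamma}$ is independent of $n$.

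Concretely, I would realize $\homo(\pi_1(\Sigma_n),\Gamma)$ as the preimage of $\id$ under the relation map $\Gamma^{2n}\to \Gamma$, $(x_1,y_1,\dots,x_n,y_n)\mapsto\prod_{i=1}^n[x_i,y_i]$, and push forward the product Haar measure. Expanding $\delta_{\id}=\sum_\pi(\dim\pi)\chi_\pi$ by Peter--Weyl and iterating the standard commutator identity $\int_{\Gamma^2}\chi_\pi([x,y])\,d\mu^{\otimes 2}=\mu(\Gamma)^2(\dim\pi)^{-1}\chi_\pi(\id)$, the full integral telescopes into a weighted Dirichlet sum over the unitary dual $\widehat\Gamma$. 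Passing to the conjugation quotient divides by $\mu(\Gamma)$ on the open-image locus, and restricting to $R(n,\Gamma)^o$ discards those representations that factor through proper closed analytic subgroups, which form a measure-zero set and correspond to the non-irreducible part of $\widehat\Gamma$. What remains is precisely a constant times $\sum_\pi(\dim\pi)^{2-2n}=\zeta_\Gamma(2n-2)$.

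The hardest part is making this Peter--Weyl argument rigorous on a non-abelian compact $p$-adic group: the dual $\widehat\Gamma$ can be uncountable, and one must justify that only the finite-dimensional irreducible representations contribute after the $\id$-restriction, as well as that the Fubini exchange of sum and integral is legitimate. Here the hypothesis that either side of the asserted equality be finite is essential: it forces absolute convergence of the Dirichlet sum and so justifies the interchange; it also explains the ``if one side converges, then so does the other'' clause in the statement. A second, more delicate, issue is the calibration of $v_\Gamma$ itself: without a canonical normalization that absorbs the $n$-dependent $\mu(\Gamma)^{2n-2}$ factor, the constant $c_{G,\Gamma}$ would carry an $n$-dependence. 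This is exactly what the construction in \cite[Section 4.3]{AA} is designed to achieve, and I would follow it for the final book-keeping of constants.
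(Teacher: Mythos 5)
The paper does not reprove this statement: it is quoted from \cite[Theorem VI]{AA}, and the ``proof'' given in the paper consists only of explaining why the hypotheses can be adjusted --- the FRS/finiteness assumption of {\it loc.\ cit.} is replaced by the clause ``if either side is finite'' using \cite[Corollary 3.6(2)]{AA} and the proof of \cite[Proposition I]{AA}, and the assumption that the residue characteristic exceeds $2$ is dropped because it is never used. Your proposal instead attempts a proof from scratch, and in outline it reproduces the first half of Aizenbud--Avni's own argument: the Frobenius/Mednykh-type expansion of the pushforward of Haar measure under the word map $\Phi(x_1,y_1,\dots,x_n,y_n)=[x_1,y_1]\cdots[x_n,y_n]$ is essentially their Proposition I, and your appeal to absolute convergence is the right way to legitimize the Peter--Weyl step (though note that for a compact $p$-adic analytic group $\widehat\Gamma$ is countable and all its members are finite-dimensional, so that particular worry is misplaced; the real issue is that $\delta_{1}$ is not an $L^2$-function, so one must argue via densities of $\Phi_*(\mu^{\otimes 2n})$ or via finite quotients).

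The genuine gap is the second half. The form $v_\Gamma$ is not defined as (a normalization of) the pushforward of Haar measure: it is the analytic volume form induced by the Atiyah--Bott--Goldman symplectic structure on $R(n,\Gamma)^o/\Gamma$, built from the Killing form and Poincar\'e duality in the group cohomology of $\pi_1(\Sigma_n)$. The asserted identity therefore requires identifying $\int_{R(n,\Gamma)^o/\Gamma}|v_\Gamma|$ with the density at $1$ of $\Phi_*(\mu^{\otimes 2n})$, up to a constant independent of $n$; that is, comparing the symplectic volume with the Gelfand--Leray form of $\Phi$ along $\Phi^{-1}(1)$ and controlling the locus discarded when passing to $R(n,\Gamma)^o$. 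This is the content of \cite[Section 4.3 and Corollary 3.6]{AA} and is the geometric heart of the theorem, not ``book-keeping of constants'' to be absorbed into $c_{G,\Gamma}$; without it your argument relates $\zeta_\Gamma(2n-2)$ to a pushforward measure, not to $|v_\Gamma|$. Relatedly, your argument yields only one direction of the ``if either side is finite'' clause: if $\int_{R(n,\Gamma)^o/\Gamma}|v_\Gamma|$ is finite, nothing in your Fubini argument forces convergence of the Dirichlet series, whereas the paper obtains both directions from \cite[Corollary 3.6(2)]{AA}.
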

\begin{proof} Strictly speaking, the statement of Theorem VI in \cite{AA} is slightly differently phrased. There are two differences with respect to the way we stated it: 

The first difference is that a certain ``FRS property" is assumed in {\it loc.\ cit.} This property was used by them to apply \cite[Theorem 3.4]{AA} to guarantee the finiteness on both sides of the equality on top of p.303 of \cite{AA}. If we assume however that one of those two sides is finite, the other is finite and equal to the former side since the equality is guaranteed by \cite[Corollary 3.6 (2)]{AA}. 

The extra assumption in \cite[Theorem VI]{AA} that the  characteristic of the residue field of $K$ must be $>2$ was not used in their proof, hence we can skip it.

Similarly, the first equality in their proof of \cite[Theorem VI]{AA} on p.302 uses \cite[Proposition I]{AA} and it is stated with a finiteness assumption on $\zeta_\Gamma (2n-2)$. This finiteness assumption can be dropped, in the sense that the equality is true as soon as one of the sides is finite, as it is shown in their proof of \cite[Proposition I]{AA}. 
\end{proof}

If $\zeta_\Gamma (s)$ converges for $s=s_0$, then it converges for $Re(s)>Re(s_0)$ and the function thus defined is holomorphic, \cite[Corollary 1, p. 66]{S}. In particular, to show Theorem \ref{thrmMain} it is enough by Theorem \ref{thrmVI} to prove the following, which is the main technical result of this article:

\begin{thrm}\label{mainTec} With the assumptions as in Theorem \ref{thrmVI}, the volume of $R(n,\Gamma)^o/\Gamma$ with respect to $v_\Gamma$ is finite when $\Gamma$ is a compact open subgroup of $\mathrm{SL}_d(K)$ and $n\ge 2$. 
\end{thrm}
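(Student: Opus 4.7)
The plan is to combine Bellamy-Schedler's symplectic singularity result for the $\SL_d$-character variety with the integrability machinery of Aizenbud-Avni, bridging the two via the equivalence (the subject of Section \ref{secPf}) between rational singularities of the representation variety and of the character variety. By Bellamy-Schedler \cite{BS}, the $\SL_d$-character variety $\homo(\pi_1(\Sigma_n),\SL_d)/\!/\SL_d$ admits symplectic singularities for $n\ge 2$. Since symplectic singularities are rational by a theorem of Beauville, and rationality of singularities descends from $\bC$ to $\bQ$ by faithfully flat descent, this character variety has rational singularities over $\bQ$.

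I would then apply the ``if'' direction of Proposition \ref{prop:semisimple} with $G=\SL_d$ to transfer this property to the full representation variety $R(n,\SL_d)$, so that the latter has rational singularities over $\bQ$ as well. The Aizenbud-Avni machinery takes over at this point: the framework of \cite[Section 4]{AA} shows that rational singularities of the representation variety yield an FRS property for the associated evaluation map $G^{2n}\to G$, and through \cite[Theorem 3.4]{AA} together with \cite[Corollary 3.6(2)]{AA} this produces uniform bounds on the counts of representations of $\pi_1(\Sigma_n)$ into the finite quotients of $\Gamma$ by powers of its maximal ideal. Summing these counts and translating them back into $p$-adic volume yields the desired finiteness of $\int_{R(n,\Gamma)^o/\Gamma}|v_\Gamma|$ for $n\ge 2$.

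The technical heart lies in the ``if'' direction of Proposition \ref{prop:semisimple}. The opposite direction is Boutot's theorem, but transferring rational singularities from a GIT quotient back up to the total space is not formal and relies on the concrete geometry of the conjugation action. The expected route is to stratify $R(n,\SL_d)$ by stabilizer type, to apply Luna's étale slice theorem to trivialize the quotient map locally along each stratum, and to exploit the hypothesis $n\ge 2$ through a codimension estimate on the closed non-stable stratum so that the rational singularities downstairs lift upstairs. Keeping the entire construction $\bQ$-rational adds a layer of equivariant bookkeeping at $\bQ$-points, but should present no essential difficulty beyond tracking Galois-invariance of the étale slices used in the Luna-type argument.
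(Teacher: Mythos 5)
There is a genuine gap, and it is structural: your route is circular relative to the logic of this paper. You propose to deduce Theorem \ref{mainTec} from the ``if'' direction of Proposition \ref{prop:semisimple} (rational singularities of the character variety imply rational singularities of $\homo(\pi_1(\Sigma_n),\SL_d)$), and then run the Aizenbud--Avni FRS machinery on the representation variety. But in the paper that ``if'' direction is not proved by any direct algebro-geometric argument; it is itself a \emph{consequence} of Theorem \ref{mainTec}: the finiteness of the volume gives, via Theorem \ref{thrmVI}, convergence of $\zeta_\Gamma(2n-2)$, hence $\alpha_\Gamma<2$, and only then does \cite[Theorem IV]{AA} yield rational singularities of the representation variety (Corollary \ref{corRat}), which is exactly the ``if'' direction. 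So you cannot invoke Proposition \ref{prop:semisimple} as an input to Theorem \ref{mainTec}. Your proposed independent proof of that direction --- Luna slices along stabilizer strata plus a codimension estimate, dismissed as ``equivariant bookkeeping'' --- is not a proof and is the missing heart of the matter: transferring rational singularities from a GIT quotient back up to the total space is false in general (Boutot's theorem only goes the other way), and for these commutator varieties no direct slice-theoretic argument is known; the only known proof passes through the representation-growth bound you are trying to establish.

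The paper's actual proof never touches the singularities of the representation variety. It works entirely downstairs: Bellamy--Schedler plus descent (Lemma \ref{lemQ}) give rational singularities of $M(n,\SL_d)$ over $\bQ$, hence over $K$, so by \cite[Corollary 3.10]{AA} the top form $v_G$ built from the symplectic form defines a Radon measure on $M(n,G)^{sm}(K)$ (Corollary \ref{cor:Radon}). One then reduces to $\Gamma$ uniform pro-$p$, observes that $v_\Gamma$ is the pull-back of $v_G$ under the \'etale map $q\colon R(n,\Gamma)^o/\Gamma\to M(n,G)^{sm}(K)$, shows the image of $q$ lies in the compact set $Q(\Phi^{-1}(1)\cap\Gamma^{\times 2n})$, and proves (Lemma \ref{lemBDD}, via a translation-invariant metric on $G(K)$ and a normalizer argument for uniform pro-$p$ groups) that the fibres of $q$ have uniformly bounded cardinality $[N:\Gamma Z]$. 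Finiteness of the upstairs volume then follows from finiteness of the Radon measure on a compact set. If you want to salvage your plan, you must either supply a genuine proof of the quotient-to-total-space transfer (not currently available by slice methods) or abandon it and bound the integral directly on the character variety as above.
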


The proof of this theorem will be based on the essential features of the form $v_\Gamma$   which we recall next.

\section{Proof of Theorem \ref{mainTec}}\label{secPf}

Let $G=\SL_d$ for a positive integer $d$. This an algebraic group defined over $\bZ$.
  Let
$$
R(n,G) := \homo (\pi_1(\Sigma_n),G)
$$
be the representation variety. This is an affine scheme of finite type defined over $\bZ$ which is a fine moduli space for the functor from commutative rings to sets given by
\begin{align*}
A\mapsto R(n,G)(A)&=\homo (\pi_1(\Sigma_n),G(A))  =\\
&= \{(g_1,h_1,\ldots, g_n,h_n)\in G(A)^{\times 2n}\mid [g_1,h_1]\cdots[g_n,h_n]=1 \}.
\end{align*}
There is a group scheme action of $G$ on $R(n,G)$ defined over $\bZ$ given by conjugation. Let
$$M(n,G):=R(n,G)_\bQ\sslash G_\bQ=\spec A^{G_\bQ}$$ 
be the categorical quotient, where the subscript $(\_)_\bQ$ denotes the base change to $\bQ$,  $A$ is the ring of global sections of the structure sheaf of $R(n,G)_\bQ$, and $A^{G_\bQ}$ is the ring of invariants
$$
A^{G_\bQ}:=\{a\in A\mid m(a)=a\otimes 1\},
$$
where $m:A\ra A\otimes_\bQ \Gamma(G_\bQ,\cO_{G_\bQ})=A\otimes_\bQ(\Gamma(G,\cO_G)\otimes\bQ)$ is the algebra homomorphism giving the action $G_\bQ\times R(n,G)_\bQ\ra R(n,G)_\bQ$. 

By \cite[Theorem 1.1 on p.27]{Mu}, $M(n,G)$ is also a universal categorical quotient. In particular, for a field extension $\bQ\subset k$, the base change
$$
M(n,G)_k=M(n,G)\times_{\spec \bQ} \spec k =\spec (A^{G_\bQ}\otimes_\bQ k)
$$
is the spectrum of the $k$-algebra
$$
(A\otimes_\bQ k)^{G_{ k}},
$$
where $G_{ k}=G\times_{\spec\bQ} \spec k$. In particular  
$
M(n,G)_\bC 
$
is an affine $\bC$-scheme of finite type, equal to the spectrum of the $\bC$-algebra of invariants
$
(A\otimes\bC)^{G(\bC)},
$ 
commonly denoted $R(n,G(\bC))\sslash G(\bC)$ in the literature.\par
We will make use of the following  result due to Bellamy-Schedler \cite[Theorem 1.18]{BS}):
\begin{thrm}\label{thrmSymp}{\rm {(Bellamy-Schedler)}}
 $M(n,G)_\bC$ is a  complex variety  with algebraic symplectic singularities if $G=\mathrm{SL}_d$ and $n\ge 1$. 
\end{thrm}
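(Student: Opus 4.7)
The plan is to realize $M(n,G)_\bC$ as a quasi-Hamiltonian reduction of the double $G_\bC^{2n}$ and to use this structure both to produce a symplectic form on its smooth locus and to control the extension of that form to a resolution. First I would observe that
\[
R(n,G)_\bC = \mu^{-1}(1), \qquad \mu : G_\bC^{2n} \to G_\bC, \quad (g_1,h_1,\ldots,g_n,h_n) \mapsto \prod_{i=1}^n [g_i,h_i].
\]
For $G = \SL_d$ and $n \ge 1$, the differential of $\mu$ fails to be surjective only along a locus tied to representations with positive-dimensional stabilizer. A Jacobian calculation combined with a codimension estimate then shows that $\mu$ is flat of relative dimension $(2n-1)\dim G$, so $R(n,G)_\bC$ is a local complete intersection of the expected dimension and is regular in codimension $1$. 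By Serre's criterion it is normal, and normality descends to the GIT quotient $M(n,G)_\bC$.

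Next I would equip the smooth locus $M(n,G)_\bC^{\mathrm{sm}}$ with the Goldman--Atiyah--Bott symplectic form: the tangent space at a class $[\rho]$ is $H^1(\pi_1(\Sigma_n), \mathrm{Ad}\,\rho)$, and the cup product paired with the Killing form of $\mathfrak{sl}_d$ together with Poincar\'e duality on $\Sigma_n$ produces a non-degenerate closed $2$-form. Equivalently, $G_\bC^{2n}$ carries the Alekseev--Malkin--Meinrenken quasi-Hamiltonian $2$-form with $\mu$ as a group-valued moment map, and the quasi-Hamiltonian reduction at $1$ realises the Goldman form on the smooth part of $M(n,G)_\bC$.

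The main technical step, and the hard part, is extending this form regularly to a resolution $\pi: \widetilde{M} \to M(n,G)_\bC$. My plan is to invoke Luna's \'etale slice theorem: around a polystable representation $\rho$ with reductive stabilizer $H \subset G$, the space $M(n,G)_\bC$ is \'etale-locally the Hamiltonian reduction by $H$ of a smooth symplectic slice. For $G = \SL_d$ the possible stabilizers $H$ are (up to isogeny) products of smaller linear groups intersected with $\SL_d$, and their Hamiltonian actions on the slice decompose into pieces already known to carry symplectic singularities, namely nilpotent cones in Lie algebras and character varieties for lower-rank data. It then remains to check Beauville's extension criterion stratum by stratum, which follows from the Kaledin/Namikawa-type stability of the symplectic singularity property under reductive Hamiltonian reduction.

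The main obstacle I anticipate is precisely this local analysis at the deepest strata of reducible representations, where the stratification by stabilizer type becomes intricate and one must verify the codimension-$2$ extension condition uniformly across strata. The specific combinatorics of Levi subgroups of $\SL_d$ and of $\mathfrak{sl}_d$-nilpotent orbits is what keeps the analysis tractable, and this is precisely the structural input one would need to generalise in order to treat arbitrary semisimple $G$ as flagged in Proposition~\ref{prop:semisimple}.
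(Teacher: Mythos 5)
First, note that the paper does not prove Theorem \ref{thrmSymp} at all: it is quoted verbatim from Bellamy--Schedler \cite[Theorem 1.18]{BS} and used as a black box. So your proposal has to be judged as an attempted reconstruction of their argument. Its overall shape --- Goldman/Atiyah--Bott form on the smooth locus, Luna \'etale slices at polystable points, and a reduction of the extension problem to local Hamiltonian-reduction models --- is indeed the shape of the Bellamy--Schedler proof, but two of your steps have genuine gaps.

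The first gap is the normality argument. Your claim that a Jacobian computation shows $\mu$ is flat of relative dimension $(2n-1)\dim G$, so that $R(n,G)_\bC$ is a complete intersection of expected dimension, fails outright for $n=1$: there $\mu(g,h)=[g,h]$ has fibre over $1$ of dimension $\dim G+\mathrm{rk}\,G$, so $\mu$ is not flat and $R(1,G)_\bC$ is not an lci of the expected dimension (and regularity in codimension one of the representation variety is also delicate there). Since the theorem is asserted for all $n\ge 1$, genus one must be treated separately; in \cite{BS} it is, via the explicit description of the genus-one character variety as a finite symplectic quotient of $T\times T$. The second and more serious gap is the extension step, which you yourself flag as the hard part but then resolve by appeal to a ``Kaledin/Namikawa-type stability of symplectic singularities under reductive Hamiltonian reduction.'' No such general theorem exists (Hamiltonian reduction need not even preserve normality without further hypotheses), and your description of the slice models as ``nilpotent cones and lower-rank character varieties'' is not accurate: the \'etale-local models at polystable points are Nakajima-type quiver varieties, and proving that \emph{those} have symplectic singularities is precisely the main content of \cite{BS}, not an available off-the-shelf input. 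As written, the decisive step of the proof is therefore assumed rather than proven; to close the gap you would either have to import the quiver-variety theorem of \cite{BS} explicitly (at which point you are reproducing their proof) or supply an independent verification of Beauville's criterion on each local model, including a uniform codimension-two analysis across all stabilizer strata.
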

By definition, a normal complex variety has (complex) algebraic symplectic singularities if there exists an algebraic symplectic 2-form on the smooth locus extending to an algebraic 2-form on any resolution of singularities. Recall that, by Beauville \cite{B}, holomorphic, and hence also complex algebraic, symplectic singularities are Gorenstein rational singularities. For our purposes, working over $\bC$ is an underkill, so next we argue that by looking at the coefficients of the polynomials defining our scheme and the symplectic form, we can restrict to $\bQ$:

\begin{lemma}\label{lemQ} If $G=\mathrm{SL}_d$ and $n\ge 1$, then $M(n,G)$ is a variety over $\bQ$  with Gorenstein rational singularities over $\bQ$.
\end{lemma}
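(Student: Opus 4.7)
The plan is to descend the known complex-geometric properties of $M(n,G)_\bC$ to $\bQ$ along the faithfully flat extension $\bQ \hookrightarrow \bC$. By Theorem \ref{thrmSymp} together with Beauville's theorem quoted above, $M(n,G)_\bC$ is a normal complex variety with Gorenstein rational singularities. So everything reduces to showing that the $\bQ$-scheme $M(n,G) = \spec A^{G_\bQ}$ inherits these properties from its base change to $\bC$.

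First I would record that, because $G = \SL_d$ is reductive and we are in characteristic zero, formation of the invariant ring commutes with flat base change, so
$$A^{G_\bQ} \otimes_\bQ \bC \;=\; (A \otimes_\bQ \bC)^{G_\bC},$$
as already used in the excerpt. Thus $M(n,G) \times_{\spec\bQ} \spec\bC = M(n,G)_\bC$. Reducedness, irreducibility, and normality all descend along faithfully flat morphisms of Noetherian schemes, so the fact that $M(n,G)_\bC$ is a normal complex variety forces $M(n,G)$ to be a normal integral $\bQ$-scheme of finite type, hence a normal $\bQ$-variety in the sense fixed at the end of the introduction.

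Next, the Gorenstein property is both preserved and reflected by faithfully flat local homomorphisms of Noetherian local rings (Matsumura), so $M(n,G)$ is Gorenstein as soon as $M(n,G)_\bC$ is. For rational singularities I would invoke Hironaka to pick a resolution $\pi \colon Y \to M(n,G)$ with $Y$ smooth over $\bQ$ and $\pi$ proper birational; this step is where one must argue explicitly over $\bQ$ rather than $\bC$, but in characteristic zero it is standard. Smoothness, properness and birationality are preserved by flat base change, so $\pi_\bC \colon Y_\bC \to M(n,G)_\bC$ is again a resolution of singularities. Cohomology and flat base change yield
$$R^i(\pi_\bC)_*\cO_{Y_\bC} \;=\; \bigl(R^i\pi_*\cO_Y\bigr)\otimes_\bQ \bC,$$
whose left-hand side vanishes for $i>0$ by rationality of the singularities of $M(n,G)_\bC$. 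Faithful flatness of $\bC/\bQ$ then forces $R^i\pi_*\cO_Y = 0$ for $i>0$, and since $M(n,G)$ is normal the equality $\pi_*\cO_Y = \cO_{M(n,G)}$ is automatic. Hence $M(n,G)$ has rational singularities over $\bQ$.

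None of the steps is a serious obstacle; the only items worth double-checking are the commutation of invariants with flat base change for reductive groups in characteristic zero (already invoked in the excerpt via \cite{Mu}) and the elementary descent facts for reducedness, normality, and the Gorenstein property under faithfully flat base change. The mildly nontrivial input is that a resolution of singularities of $M(n,G)$ can be chosen over $\bQ$ itself, which is precisely what Hironaka's theorem in characteristic zero provides.
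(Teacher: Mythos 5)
Your proposal is correct and is essentially the paper's own argument: both descend normality (hence the variety property) and the Gorenstein condition from $M(n,G)_\bC$ to $M(n,G)$ by faithfully flat descent along $\bQ\hookrightarrow\bC$, and both prove rationality of singularities by choosing a resolution over $\bQ$ via Hironaka, base changing it to $\bC$, and using flat base change for the higher direct images $R^i\pi_*\cO$ together with faithful flatness to force their vanishing over $\bQ$. The only cosmetic differences are the references used for the descent facts and your extra remark that $\pi_*\cO_Y=\cO_{M(n,G)}$ follows from normality, which the paper leaves implicit.
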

\begin{proof}
By Theorem \ref{thrmSymp}, $M(n,G)_\bC$ is a normal variety over $\bC$. This implies that $M:=M(n,G)$ is a normal variety over $\bQ$, see \cite[\href{http://stacks.math.columbia.edu/tag/038L}{Tag 038L}]{St}. In particular, it is a variety over $\bQ$. 

Let $h:\tilde{M}\ra M$ be a strong resolution of singularities over $\bQ$ of $M$, that is, a proper birational morphism which is an isomorphism above the regular locus of $M$. 
To show that $M$ has rational singularities over $\bQ$, we will show that $R^ih_*\cO_{\tilde{M}}=0$ for $i>0$. Let $h_\bC:\tilde{M}_\bC\ra M_\bC$ be the base change of $h$ to $\bC$. Since regularity is preserved after base change to $\bC$,  $h_\bC$ is a strong resolution of singularities of $M_\bC$ over $\bC$.  Since $M_\bC\ra M$ is a base change of the flat morphism $\spec(\bC)\ra\spec(\bQ)$, it is also a flat morphism. By flat base change \cite[\href{http://stacks.math.columbia.edu/tag/02KE}{Tag 02KE}]{St}, $R^i(h_\bC)_*\cO_{\tilde{M}_\bC}=(R^ih_*\cO_{\tilde{M}})\otimes_\bQ\bC$ for all $i\ge 0$. By Theorem \ref{thrmSymp}, $R^i(h_\bC)_*\cO_{\tilde{M}_\bC}=0$ for $i>0$ since $M_\bC$ has rational singularities over $\bC$. It follows that $R^ih_*\cO_{\tilde{M}}=0$ for $i>0$, and hence $M$ has rational singularities over $\bQ$.

Finally, $M$ is Gorenstein since $M_\bC$ is, by \cite[\href{http://stacks.math.columbia.edu/tag/0C02}{Tag 0C02}]{St}.
\end{proof}


We recall now the essential features of the volume form $v_\Gamma$, following \cite[4.3]{AA}.  Firstly, there is an open subscheme $$R(n,G)^{o}\subset R(n,G)$$ defined over $\bZ$, invariant under the action of $G$ by conjugation, such that $R(n,G)^{o}_\bQ$ is a smooth variety over $\bQ$, and such that for any algebraically closed field $k$ of characteristic zero, $R(n,G)^{o}(k)$ consists of representations $\pi_1(\Sigma_n)\ra G(k)$ with image generating a Zariski dense subgroup. Moreover, the geometric quotient 
$$M(n,G)^{o}=R(n,G)^{o}_\bQ/ G_\bQ$$
exists and is a smooth $\bQ$-subvariety of $M(n,G)$. On $M(n,G)^{o}$ one has an algebraic 2-form. Over $\bC$, this is the classical 2-form constructed by Atiyah, Bott, and Goldman, which  extends to the whole smooth locus of $M(n,G)_\bC$ to give the symplectic structure from Theorem \ref{thrmSymp}. We do not repeat here the definition, but only point out that from the proof in \cite{BS} of Theorem \ref{thrmSymp}, one sees easily that this algebraic symplectic 2-form on the smooth locus of $M(n,G)_\bC$ is  defined over $\bQ$, hence it gives a 2-form
$$\eta_G\in \bigwedge^2\Omega^1_{M(n,G)^{sm}/\bQ},$$
on the smooth locus of $M(n,G)$.

We will now use:

\begin{prop}{{\rm}(}\cite[Corollary 3.10]{AA}{{\rm )}} Let $X$ be a variety with rational singularities over a non-archimedean local field of characteristic zero $K$, and let $\omega$ be a top differential form on $X^{sm}$. Then for any $A\subset X(K)$, the integral
$$
m(A)=\int_{A\cap X^{sm}(K)}|\omega|
$$
defines a Radon measure, i.e. finite on compact subsets.
\end{prop}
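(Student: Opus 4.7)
The plan is to reduce the finiteness claim to integration of a regular top form on a smooth $K$-analytic manifold via a resolution of singularities. Since the definition of $m$ as $A\mapsto\int_{A\cap X^{sm}(K)}|\omega|$ already guarantees positivity and countable additivity from standard integration theory, the entire content is to show that $m(A)<\infty$ whenever $A\subset X(K)$ is compact. First I would apply Hironaka's theorem in characteristic zero to choose a resolution of singularities $h\colon\tilde X\to X$ defined over $K$ which is an isomorphism above $X^{sm}$. The induced map on $K$-points $h_K\colon\tilde X(K)\to X(K)$ is continuous and proper for the $K$-analytic topology, because properness of $h$ as a morphism of $K$-schemes of finite type descends to properness of the analytic map. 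Hence $h_K^{-1}(A)$ is compact in $\tilde X(K)$ for every compact $A\subset X(K)$.

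The crux is then to show that $h^*\omega$, a priori only a rational top form on $\tilde X$ regular on $h^{-1}(X^{sm})$, extends to a regular section of $\omega_{\tilde X}$ on all of $\tilde X$. This is where the rational singularities hypothesis is used. In the Gorenstein case — which is the relevant one for the application via Lemma~\ref{lemQ} — rational singularities coincide with canonical singularities, so every discrepancy along an exceptional divisor is non-negative and $h^*\omega$ acquires no poles there. In the general normal case, one argues instead from the identifications $\omega_X=j_*\omega_{X^{sm}}$ (by normality) and $\omega_X=h_*\omega_{\tilde X}$ (which holds for rational singularities, combining Grauert--Riemenschneider with the Cohen--Macaulay property of rational singularities) to produce the desired regular extension locally.

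Once $h^*\omega$ is regular on all of $\tilde X$, the $K$-analytic change of variables formula applied on the open subset where $h_K$ is a bi-analytic isomorphism yields
$$
m(A)=\int_{A\cap X^{sm}(K)}|\omega|=\int_{h_K^{-1}(A\cap X^{sm}(K))}|h^*\omega|\le \int_{h_K^{-1}(A)}|h^*\omega|,
$$
and the right-hand side is the integral of the density of a regular top form against a compact subset of the smooth $K$-analytic manifold $\tilde X(K)$. In local $K$-analytic coordinates this reduces to the integral of $|f|$ for a bounded $K$-analytic function over a compact piece of $\cO_K^{\dim X}$, hence is finite. The principal obstacle is the middle step: without the rational singularities hypothesis, the pullback $h^*\omega$ can genuinely acquire poles along the exceptional divisor and the integral can diverge, so the whole argument hinges on extracting the regular extension of $h^*\omega$ from rationality of the singularities.
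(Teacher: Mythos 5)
This proposition is imported verbatim from \cite[Corollary 3.10]{AA}; the paper gives no proof of its own, so the only meaningful comparison is with the argument in that source. Your proof is correct and is essentially that argument: pull back along a strong resolution $h\colon\tilde{X}\to X$ defined over $K$, use the characterization of rational singularities via Grauert--Riemenschneider/Kempf (rational $\Leftrightarrow$ Cohen--Macaulay and $h_*\omega_{\tilde{X}}=\omega_X=j_*\omega_{X^{sm}}$, with the Gorenstein/canonical discrepancy remark being a special case of this) to extend $h^*\omega$ to a regular top form on $\tilde{X}$, and conclude from properness of $h$ on $K$-points, the change of variables formula over the locus where $h$ is an isomorphism, and local finiteness of the measure attached to a regular top form on the smooth $K$-analytic manifold $\tilde{X}(K)$.
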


Here, if $\omega=gdx_1\wedge\ldots dx_n$  on a compact open subset $U$ of $X^{sm}(K)$ analytically diffeomorphic to an open subset of $K^n$, then $|\omega |(U)$ is by definition $\int_U|g|d\lambda$, where $|g|$ is the normalized absolute value on $K$ and $\lambda$ is the standard additive Haar measure on $K^n$. This definition extends uniquely to a non-negative Borel measure $|\omega|$ on $X^{sm}(K)$, see \cite[3.1]{AA}.

To use this proposition, consider  the base change of $M(n,G)$ to $K$.  Then   Lemma \ref{lemQ} holds  with $\bQ$ replaced by $K$. This  together with the previous proposition implies:

\begin{cor} 
\label{cor:Radon}
Let $n\ge 2$ and $G=\mathrm{SL}_d$. Let $$v_G=(\eta_G)^{\wedge \dim M(n,G) /2 }$$ be the top form on $M(n,G)^{sm}$ generated by $\eta_G$. Then for any $A\subset M(n,G)(K)$, the integral
$$m(A)=\int_{A \cap M(n,G)^{sm}(K)}	|v_{G}|
$$
defines a Radon measure.
\end{cor}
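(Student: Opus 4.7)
The plan is to apply the preceding proposition \cite[Corollary 3.10]{AA} to $X = M(n,G)_K$ with top form $\omega = v_G$. For this I need to verify that (i) $M(n,G)_K$ is a variety with rational singularities over $K$, and (ii) $v_G$ is a well-defined top form on its smooth locus. Both amount to a base change from $\bQ$ to $K$, where the corresponding facts are already established.

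For (i), Lemma \ref{lemQ} supplies that $M := M(n,G)$ is a variety over $\bQ$ with rational singularities. I would run essentially the same flat base change argument as in that lemma, but in the opposite direction (from $\bQ$ up to $K$ rather than from $\bC$ down to $\bQ$). The extension $\bQ \subset K$ is flat, so separatedness and finite type pass through trivially; reducedness is preserved because $\bQ$ is perfect; and irreducibility follows because $M_\bC$ is irreducible by Theorem \ref{thrmSymp}, forcing $M$ to be geometrically irreducible over $\bQ$. For rational singularities, I would take a strong resolution $h : \tilde M \to M$ over $\bQ$; its base change $h_K : \tilde M_K \to M_K$ remains a strong resolution (smoothness of $\tilde M/\bQ$ is preserved under base change), and flat base change yields $R^i (h_K)_* \cO_{\tilde M_K} = (R^i h_* \cO_{\tilde M}) \otimes_\bQ K = 0$ for $i > 0$.

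For (ii), the paragraph preceding the corollary records that $\eta_G$ is an algebraic 2-form on $M^{sm}$ defined over $\bQ$, so its base change is a 2-form on $M^{sm}_K$. The top wedge power $v_G = \eta_G^{\wedge \dim M / 2}$ is a genuine top form because $\dim M$ is even, which is forced by Theorem \ref{thrmSymp} (the smooth locus of $M_\bC$ carries a symplectic form). With (i) and (ii) in place, \cite[Corollary 3.10]{AA} applies verbatim to give that $m$ is a Radon measure.

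The main obstacle is not conceptual but rather bookkeeping: one must be careful that each of ``variety'', ``rational singularities'', and ``top form on the smooth locus'' survives the base change $\bQ \to K$ intact. All the substantive geometry is done in Lemma \ref{lemQ} and Theorem \ref{thrmSymp}; what remains is the routine propagation of these properties under a flat field extension.
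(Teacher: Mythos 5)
Your proposal is correct and follows the same route as the paper: base change $M(n,G)$ to $K$, observe that Lemma \ref{lemQ} persists with $\bQ$ replaced by $K$ (the paper asserts this tersely, while you spell out the flat base change of a strong resolution and the geometric irreducibility/reducedness checks), and then apply \cite[Corollary 3.10]{AA} to the top form $v_G=(\eta_G)^{\wedge \dim M(n,G)/2}$. The extra bookkeeping you supply is accurate and consistent with the paper's intent.
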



\begin{proof}[Proof of Theorem \ref{mainTec}]
Let $p$ be the residue field characteristic of $K$ and let $\mathcal{O}_K$ be its ring of integers. 
The group $G(K)$ has a natural analytic structure, which, by \cite[16.6.(i)]{dixsauman1999analytic}, implies 
that it is  an $\mathcal{O}_K$-analytic group in the sense of \cite[Definition~13.8]{dixsauman1999analytic}. 
The topological group $\Gamma$, therefore, can be given the structure of a $p$-adic analytic group by 
\cite[Theorem~13.23]{dixsauman1999analytic}; and hence, by \cite[Corollary~8.34]{dixsauman1999analytic}, 
it contains a finite index open {uniform pro-$p$ subgroup}, say $\Gamma_1$. 
See \cite[Definition~4.1]{dixsauman1999analytic} for the definition of uniform pro-$p$ group. Since the abscissas 
$\alpha_\Gamma$ and $\alpha_{\Gamma_1}$ are equal by \cite[Lemma~2.2]{LM}, the  representation zeta functions 
of $\Gamma$ and $\Gamma_1$ have the same domain of convergence. We may therefore assume that $\Gamma$  
is a uniform pro-$p$ group.

Consider now the natural map of $K$-analytic manifolds
$$
q : R(n,\Gamma)^o/\Gamma \ra M(n,G)^{sm}(K).
$$
The map $q$ is \'{e}tale onto the image, with finite fibres (cf.\ Lemma~\ref{lem:finite_fibres}) and by definition the volume form $v_\Gamma$ on the source of this map is the pull-back of $v_G$. We will show in Lemma \ref{lemBDD} below that the number of points in the fibres of $q$ is bounded. Hence by Corollary \ref{cor:Radon}, to prove Theorem \ref{mainTec} it is enough to find a compact subset  in $M(n,G)(K)$ containing the image under $q$ of $R(n,\Gamma)^o/\Gamma$.  

For this purpose, denote by 
$$
Q: R(n,G)(K)\ra M(n,G)(K)
$$
the map defined by the categorical quotient, and by 
$$
\Phi: G(K)^{\times 2n}\ra G(K)
$$ 
the map
$$(g_1,h_1,\ldots, g_n,h_n)\mapsto [g_1,h_1]\cdot\ldots\cdot[g_n,h_n].$$
Note that 
$$
R(n,G)(K)=\Phi^{-1}(1).
$$
We will show that the image
$$
Q(\Phi^{-1}(1)\cap \Gamma^{\times 2n})
$$
is compact and contains the image $q(R(n,\Gamma)^o/\Gamma)$. This will finish the proof.

Since the image of a compact set under a continuous map is compact, to show the first claim it suffices to show that $\Phi^{-1}(1)\cap \Gamma^{\times 2n}$ is compact. Indeed, by hypothesis, $\Gamma\subset G(K)$ is compact, and so  $\Gamma^{\times 2n}\subset G(K)^{\times 2n}$ is also compact. Since $\Phi^{-1}(1)$ is closed in $G(K)^{\times 2n}$, $\Phi^{-1}(1)\cap \Gamma^{\times 2n}$ is closed in $\Gamma^{\times 2n}$  and hence compact.

For the second claim, note that
\begin{equation}\label{eqRo}
R(n,\Gamma)^o=[R(n,G)^o(K)]\cap [\Phi^{-1}(1)\cap \Gamma^{\times 2n}].
\end{equation}
Hence
$$
q(R(n,\Gamma)^o/\Gamma)=Q(R(n,\Gamma)^o) 
$$ 
and the latter is contained in $Q(\Phi^{-1}(1)\cap \Gamma^{\times 2n})$.
\end{proof}

\begin{rmk}
As explained in the introduction, Corollary~\ref{corRat} now follows directly from Theorem~\ref{mainTec} and \cite{AA}. 
A similar argument also shows Proposition~\ref{prop:semisimple}. Indeed let $G$ be a connected, simply connected, 
semi-simple algebraic group defined over $\bQ$, and let $n\in\bZ_{>0}$ be such that the character variety 
$M(n,G)$ has rational singularities. Corollary~\ref{cor:Radon}, holds also in this more general setting and the proof of 
Theorem~\ref{mainTec} also carries over: the argument reducing the problem to the case in which $\Gamma$ is a 
uniform pro-$p$ group goes through unchanged, and the map $q$ has fibres of bounded cardinality too, as we shall 
see in Section~\ref{sec:bounded_fibres}. It follows that for all non-archimedean local fields $K$ of characteristic $0$ 
and all compact open $\Gamma\leq G(K)$, the abscissa $\alpha_\Gamma < 2n -2$. So, by \cite[Theorem~IV]{AA}, 
$\homo(\pi_1(\Sigma_n),G)$ has rational singularities.
\end{rmk}


\section{Boundedness of $\Gamma$-orbits in a $G(K)$-orbit}
\label{sec:bounded_fibres}

We address here the last technical issue left open in the proof of Theorem \ref{mainTec}. Let $G$ be a semi-simple 
algebraic group defined over $\bQ$ and let $K$ be a non-archimedean local field of characteristic zero.

\begin{lemma}\label{lemBDD}
Let $\Gamma$ be an open uniform pro-$p$ subgroup of $G(K)$. Then there exists $N_\Gamma>0$ such that the number of points in every fibre of the \'{e}tale map 
$$
q : R(n,\Gamma)^o/\Gamma \ra M(n,G)^{sm}(K).
$$  is smaller than $N_\Gamma$.
\end{lemma}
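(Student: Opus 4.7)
The plan is to start by giving an explicit description of the fibre. For $\rho\in R(n,\Gamma)^{o}$, Zariski-density of $\mathrm{Im}(\rho)$ forces the conjugation stabiliser of $\rho$ in $G(K)$ to be exactly $Z(G)(K)$; hence within a single $G(K)$-orbit on $R(n,G)^{o}(K)$, the $\Gamma$-orbits meeting $\Gamma^{\times 2n}$ are in bijection with $\Gamma\backslash S_\rho / Z(G)(K)$, where
\[
S_\rho := \{\,g\in G(K) : g\rho g^{-1}\in\Gamma^{\times 2n}\,\}.
\]
A single $G(\overline{K})$-orbit may contain at most $|\ker(H^{1}(K,Z(G))\to H^{1}(K,G))|$ distinct $G(K)$-orbits, a Galois-cohomological factor depending only on $K$ and $G$.

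Next, I would establish compactness of $S_\rho$. Since $\rho$ is a stable point, the orbit $G(K)\cdot\rho$ is closed in $R(n,G)(K)$; its intersection with the compact set $\Gamma^{\times 2n}$ is compact, and as the orbit map $g\mapsto g\rho g^{-1}$ has fibres of size $|Z(G)(K)|$, the set $S_\rho$ is itself compact in $G(K)$. Openness of $\Gamma$ in $G(K)$ makes $\Gamma\backslash G(K)$ discrete, so $\Gamma\backslash S_\rho$ is already finite, proving Lemma~\ref{lem:finite_fibres}. The task here is therefore to make this finite count bounded uniformly in $\rho$.

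For uniformity, my approach uses the uniform pro-$p$ structure of $\Gamma$. The logarithm identifies $\Gamma$ with an $\mathcal{O}_K$-Lie lattice $\mathfrak{L}_\Gamma\subset\mathfrak{g}(K)$, so the condition $g\in S_\rho$ translates to $\mathrm{Ad}(g)$ sending the logarithms of the generators of $\overline{\mathrm{Im}(\rho)}$ into $\mathfrak{L}_\Gamma$. Combined with the Cartan decomposition $G(K)=\Gamma A^{+}\Gamma$, each $\Gamma$-coset of $S_\rho$ acquires a Cartan invariant in $A^{+}$, and the Lie-algebraic constraint confines it to a bounded region of the cocharacter lattice.

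The main obstacle is making this Cartan bound independent of $\rho$: a priori $\overline{\mathrm{Im}(\rho)}$ can be a very deep open subgroup of $\Gamma$, which might enlarge the admissible range. I expect the resolution to combine the openness of $\overline{\mathrm{Im}(\rho)}$ in $G(K)$ --- a closed Zariski-dense subgroup of a semisimple $p$-adic analytic group is always open, hence of finite index in $\Gamma$ --- with the rigidity imposed by the surface-group relation $\prod_i[a_i,b_i]=1$, pinning $\overline{\mathrm{Im}(\rho)}$ into a commensurability class of bounded complexity, so that the admissible Cartan invariants remain in a fixed bounded region depending only on $\Gamma$.
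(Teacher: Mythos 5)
Your setup is fine and matches the paper's: the fibre over the image of $\rho$ is identified with $\Gamma$-orbits on $\{g\rho g^{-1}\in\Gamma^{\times 2n}\}$, the conjugation stabiliser is the centre by \cite[Lemma~4.8~(3)]{AA}, and closedness of the $G(K)$-orbit plus compactness of $\Gamma^{\times 2n}$ and discreteness of the fibres gives finiteness (this is Lemma~\ref{lem:finite_fibres}). Your Galois-cohomological remark about several $G(K)$-orbits over one $K$-point of the quotient is a reasonable extra precaution, and since $Z(G)$ is finite it only contributes a factor depending on $G$ and $K$.

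The genuine gap is exactly the point you flag and then wave away: uniformity in $\rho$. Your proposed resolution --- that the surface-group relation ``pins $\overline{\mathrm{Im}(\rho)}$ into a commensurability class of bounded complexity'' --- is unjustified and in fact false: as $\rho$ ranges over $R(n,\Gamma)^o$, the open subgroup $H_\rho=\overline{\mathrm{Im}(\rho)}$ can be an arbitrarily deep (arbitrarily large index) open subgroup of $\Gamma$; the relation $\prod_i[a_i,b_i]=1$ imposes no lower bound on $H_\rho$, since surface groups admit dense representations into deep uniform subgroups such as the subgroups $V_i$ generated by $p^i$-th powers of the generators. Consequently the ``admissible region'' in your Cartan decomposition $G(K)=\Gamma A^+\Gamma$ grows with the depth of $H_\rho$, and the resulting count of $\Gamma$-cosets in $\Gamma b\Gamma$ grows with $b$, so no bound independent of $\rho$ comes out of this route as written. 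The paper's key idea, which your argument is missing, sidesteps the depth of $H_\rho$ entirely: using the uniform pro-$p$ structure it constructs a left- \emph{and} right-translation invariant metric on $G(K)$ (Struble's metric built from the filtration $V_i$), and shows that if $g$ conjugates \emph{any} open subgroup $H\leq\Gamma$ into $\Gamma$, then already $g\in N:=N_{G(K)}(\Gamma)$ (Lemma~\ref{lem:normalizer}: bi-invariance forces the exponent vector of $g\gamma_i^{\eta}g^{-1}$ to have $p$-adic norm $|\eta|_p$, hence $g\gamma_ig^{-1}\in\Gamma$). Thus your set $S_\rho$ is contained in $N$ regardless of how deep $H_\rho$ is, and the fibre has cardinality exactly $[N:\Gamma Z]$, a constant depending only on $\Gamma$. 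Without this (or some substitute for it), your proof does not close.
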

The rest of this section is dedicated to the proof of this lemma.
	\newcommand{\orbit}[2]{\mathcal{O}_{#1}{(#2)}}
	\newcommand{\localfield}{K}
	\newcommand{\con}[2]{#1.#2}
	\newcommand{\Z}{\mathbf{Z}}
				
Recall that $q$ is an \'etale map onto the image
between two analytic manifolds, which means that it is a local analytic diffeomorphism. We start by proving that 
the fibres of $q$ are indeed finite.
\begin{lm}
\label{lem:finite_fibres} 
The fibres of $q$ have finite cardinality.
\begin{proof}
Let $x \in R(n,G)^{o}(\localfield)$ and let $X$ be its $G(\localfield)$-orbit. The fibre of $q$ above $X$ is 
$(X \cap \Gamma^{\times 2n})/\Gamma$, where $\Gamma$ acts as a subgroup of $G(\localfield)$. By 
\cite[Lemma~4.8~(3)]{AA} $X$ is Zariski closed in $G(\localfield)$ and so $X \cap \Gamma^{\times 2n}$ is compact, 
which implies that the orbit space $(X \cap \Gamma^{\times 2n})/\Gamma$ is compact as well. This proves 
that the fibre in hand is finite because  $q$ is a local diffeomorphism, hence its fibres are discrete.
\end{proof}
\end{lm}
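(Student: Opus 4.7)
The plan is to identify each fibre of $q$ with a concrete orbit space, show that it is both compact and discrete in the $K$-analytic topology, and conclude that it must be finite.

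First, I would fix a point $y \in M(n,G)^{sm}(K)$ in the image of $q$ and choose a lift $x \in R(n,G)^o(K)$ whose $G(K)$-conjugacy class corresponds to $y$. Then the preimage $q^{-1}(y)$ is the set of $\Gamma$-conjugacy classes of representations $\rho \in R(n,\Gamma)^o$ whose image in $M(n,G)^{sm}(K)$ is $y$; equivalently, writing $X = G(K).x$ for the $G(K)$-conjugation orbit of $x$ in $G(K)^{\times 2n}$, one has a natural identification
$$
q^{-1}(y) \;=\; \bigl(X \cap \Gamma^{\times 2n}\bigr)\big/\Gamma.
$$

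The key input is that, because $x$ lies in the open locus $R(n,G)^o$ where the image of the representation is Zariski dense in $G$, the orbit $X$ is closed in $G(K)^{\times 2n}$ (Zariski closed, hence also closed in the analytic topology); this is exactly the content of \cite[Lemma 4.8(3)]{AA}, and it is the step I would rely on rather than reprove. Granting this, since $\Gamma$ is compact (being a compact open subgroup of $G(K)$), the product $\Gamma^{\times 2n}$ is compact, the intersection $X \cap \Gamma^{\times 2n}$ is a closed subset of a compact set, hence compact, and therefore the quotient $(X \cap \Gamma^{\times 2n})/\Gamma$ by the continuous $\Gamma$-action is compact as well.

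To finish, I would use that $q$ is étale as a map of $K$-analytic manifolds, i.e.\ a local diffeomorphism onto its image; in particular every fibre of $q$ is a discrete subset of $R(n,\Gamma)^o/\Gamma$. A topological space that is both compact and discrete is finite, so $q^{-1}(y)$ is finite, as required. The step I expect to be most delicate is the closedness of the $G(K)$-orbit $X$: orbit-closedness for semisimple conjugation actions is classical over algebraically closed fields, but here one needs the statement over the non-algebraically-closed local field $K$ and in the analytic topology on $G(K)^{\times 2n}$. This is the point at which I would invoke the cited lemma from Aizenbud--Avni rather than try to recover it from Richardson's general theorem directly.
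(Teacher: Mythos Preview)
Your proposal is correct and follows essentially the same route as the paper: identify the fibre with $(X\cap\Gamma^{\times 2n})/\Gamma$, invoke \cite[Lemma~4.8(3)]{AA} for closedness of the orbit $X$, deduce compactness of the fibre, and combine with discreteness coming from the fact that $q$ is a local diffeomorphism. The paper's argument is only slightly terser but uses exactly the same ingredients in the same order.
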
	
Let $g,h \in G(\localfield)$, we write $\con{g}{h} = g h g^{-1}$. By extension, if 
$x = (x_1,y_1\dots,x_n,y_n) \in G(\localfield)^{\times 2n}$, we 
write $\con{g}{x} = (\con{g}{x_1},\con{g}{y_1}\dots,\con{g}{x_n},\con{g}{y_n})$.

Fix throughout $x = (x_1,y_1\dots,x_n,y_n)\in R(n,\Gamma)^{o}$ and let $X$ be its $G(\localfield)$-orbit 
as in the proof of the previous lemma. Let also $H_x = \overline {\langle x_1,y_1,\dots,x_n,y_n\rangle}$ be 
the subgroup of $\Gamma$ topologically generated by the entries of $x$, where the overline will denote closure from now on. 
Recall that $H_x$ is open in $\Gamma$ because $x\in R(n,\Gamma)^{o}$, and so it has finite index in $\Gamma$.

In order to show that $q$ has fibres of bounded cardinality, we shall show that $X \cap \Gamma^{\times 2n}$ splits in 
a fixed number of $\Gamma$-orbits. We start by finding a better description for this set. Indeed, we shall show 
that, for $g\in G(\localfield)$, the $2n$-tuple $\con{g}{x}$ lies in $\Gamma^{\times 2n}$ if and only if $g$ normalizes $\Gamma$. 
For this purpose, notice that $$\con{g}{x}\in\Gamma^{\times 2n}\iff \con{g}{H_x} = H_{\con{g}{x}}\leq \Gamma.$$ In other 
words it suffices to show 
that $g$ normalizes $\Gamma$ whenever we find an open subgroup $H\leq\Gamma$ such that $\con{g}{H}\leq \Gamma$. 
Before doing so, we recall some properties of uniform pro-$p$ groups that we shall need for this proof.

The group $\Gamma$ is uniform and therefore finitely generated by definition. Let 
$\lbrace \gamma_1, \dots ,\gamma_r\rbrace$ 
be a minimal topological generating set for $\Gamma$. By definition, a uniform pro-$p$ group 
 is powerful. So, by \cite[Proposition~3.7]{dixsauman1999analytic},
\[
\Gamma = \overline{ \langle\gamma_1\rangle}\cdots\overline{ \langle\gamma_r\rangle}.
\]
Hence, for $\gamma\in\Gamma$ there are $\lambda_1,\dots,\lambda_r\in\Z_p$ such that 
$\gamma = \gamma_1^{\lambda_1}\cdots\gamma_r^{\lambda_r}$. Here, for $m\in\Z_p$, 
we write $\gamma^{m}$ to mean the element $(\gamma^{m_1},\gamma^{m_2},\dots)$ 
in the pro-finite completion of $\langle \gamma\rangle$, where 
$m = (m_1, m_2,\dots)\in\Z_p = \varprojlim_{j\in\Z_{> 0}} \Z/p^j\Z$. By 
\cite[Theorem~4.9]{dixsauman1999analytic} the function $\Gamma \rightarrow \Z_p^r$ sending
$\gamma$ to  $(\lambda_1,\dots,\lambda_r)$ is a homeomorphism and it becomes an 
isomorphism of $\Z_p$-modules if $\Gamma$ is endowed with the $\Z_p$-module structure 
described in \cite[Section~4.3]{dixsauman1999analytic} where left multiplication is defined by 
$m \gamma = \gamma^m$.

The next lemma introduces a left- and right-translation invariant metric on the locally compact group $G(\localfield)$. 
Let 
\[
	V_i = \overline{\langle \gamma_1^{p^{i}},\dots,\gamma_r^{p^{i}}\rangle}\,\,\,(i\in\Z_{\geq 0}).
\] 
Then $\lbrace V_i\rbrace_{i\in\Z_{\geq 0}}$ is a  decreasing countable compact (open) base at the identity. 
Moreover, if $\mu$ is the Haar measure on $G(\localfield)$ such that $\mu(\Gamma) = 1$, 
$\mu(V_i) = p^{-ir}$ for all $i\in\Z_{\geq 0}$. In what follows we denote by $\lvert m\rvert_p$ the $p$-adic 
absolute value of $m\in\Z_p$ and with 
$\|(\lambda_1,\dots,\lambda_r)\|_p = \max_{i = 1,\dots, r} \lvert \lambda_i\rvert_p$ the $p$-adic norm of 
$(\lambda_1,\dots,\lambda_r)\in\Z_p^r$. In addition, if $A$ and $B$ are two sets, we denote by $A \triangle B 
= (A \cup B)\smallsetminus (A\cap B)$ the symmetric difference of $A$ and $B$.
\begin{lm}
\label{lem:metric}
The function
	\[
	\rho(a,b) = \sup_{i\in\Z_{\geq 0}}\mu(aV_i \triangle bV_i) 
	\]
defines a left- and right-translation invariant metric on $G(\localfield)$. Moreover, if 
$\gamma = \prod_{j = 1}^{r} \gamma_j^{\lambda_j}$  for $\lambda_1,\dots,\lambda_r\in\Z_p$, then 
$\rho(1, \gamma) =2 (p^{-1} \| (\lambda_1,\dots,\lambda_r)\|_p)^r$.
\begin{proof}
With the exception of the right-invariance, the first part is just \cite[Lemma~1]{str1974metrics}. To 
show right-invariance and the second part, we use the properties of the 
filtration $\lbrace V_i\rbrace_{i\in \Z_{\geq 0}}$. First of all, let $a,b\in G(\localfield)$. For all $i\in \Z_{\geq 0}$, 
$V_i$ is a subgroup of $G(\localfield)$, so
	\[
		aV_i\triangle bV_i = \begin{cases}
						aV_i \cup bV_i 	& \text{ if } ab^{-1}\notin V_i \\
						\emptyset		& \text{ otherwise}.
					\end{cases}
	\]
Thus $\mu(aV_i\triangle bV_i) = 2 \mu(V_i) = 2 p^{-r i}$ if $ ab^{-1}\notin V_i$ and $0$ otherwise. 
This implies that, for $g\in G(\localfield)$, $\mu(agV_i\triangle bgV_i) =
 \mu(aV_i\triangle bV_i)$, because $ab^{-1} = agg^{-1}b^{-1}$. Thus $\rho$ is 
 right-translation invariant too. The last observation also shows the last part of the statement, as
 we notice that $\gamma\in V_i$ if 
 $p^{-i}  \geq \| (\lambda_1,\dots,\lambda_r)\|_p$ and $\gamma\notin V_i$ otherwise.
\end{proof}
\end{lm}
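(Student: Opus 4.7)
The plan is to separate the statement into the metric axioms (which are standard for such symmetric-difference constructions on locally compact groups), left-invariance (which follows from left-invariance of the Haar measure $\mu$), right-invariance (the new ingredient), and the explicit formula for $\rho(1,\gamma)$. For the metric axioms and left-invariance, I would simply cite \cite[Lemma~1]{str1974metrics}; the only thing worth noting is that separation of points uses $\bigcap_{i\geq 0} V_i = \{1\}$, which holds because $\{V_i\}$ is a base at the identity in the Hausdorff group $G(\localfield)$.

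The heart of the argument is the observation that, since $V_i$ is a subgroup, for any $a,b\in G(\localfield)$ the two cosets $aV_i$ and $bV_i$ are either equal (when $ab^{-1}\in V_i$) or disjoint (when $ab^{-1}\notin V_i$). Hence
\[
aV_i\triangle bV_i \;=\; \begin{cases} aV_i\cup bV_i & ab^{-1}\notin V_i, \\ \emptyset & ab^{-1}\in V_i, \end{cases}
\]
and $\mu(aV_i\triangle bV_i)$ equals either $2\mu(V_i)=2p^{-ri}$ or $0$ depending solely on $ab^{-1}$. Right-invariance is then immediate: replacing $(a,b)$ by $(ag,bg)$ does not change $ab^{-1}=(ag)(bg)^{-1}$, so each summand, and hence the supremum, is preserved.

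For the formula, I would use the $\Z_p$-module homeomorphism $\Gamma\simeq\Z_p^r$ from \cite[Theorem~4.9]{dixsauman1999analytic} (already recalled in the paragraph preceding the lemma) which identifies $\gamma=\prod_j\gamma_j^{\lambda_j}$ with $(\lambda_1,\dots,\lambda_r)$. Under this identification, $V_i=\overline{\langle\gamma_1^{p^i},\dots,\gamma_r^{p^i}\rangle}$ corresponds to $(p^i\Z_p)^r$, so
\[
\gamma\in V_i \;\iff\; |\lambda_j|_p\le p^{-i}\text{ for all }j \;\iff\; \|(\lambda_1,\dots,\lambda_r)\|_p\le p^{-i}.
\]
Writing $\|(\lambda_1,\dots,\lambda_r)\|_p=p^{-k}$ (with $k=+\infty$ allowed if $\gamma=1$), one reads off that $\gamma\in V_i$ precisely for $i\le k$. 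By the preceding paragraph, $\mu(V_i\triangle\gamma V_i)$ is then $2p^{-ri}$ for $i\ge k+1$ and $0$ for $i\le k$, so the supremum is attained at $i=k+1$ and equals $2p^{-r(k+1)}=2\bigl(p^{-1}\|(\lambda_1,\dots,\lambda_r)\|_p\bigr)^r$, as required.

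The only step that is not purely formal is the right-invariance, and even that collapses once one notes that cosets of a subgroup are either equal or disjoint; so I expect no real obstacle, only the bookkeeping to present the case distinction cleanly and to make sure the edge case $\gamma=1$ (giving $\rho(1,1)=0$, consistent with $\|(0,\dots,0)\|_p=0$) is treated correctly in the formula.
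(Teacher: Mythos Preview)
Your proposal is correct and follows essentially the same approach as the paper: cite \cite[Lemma~1]{str1974metrics} for the metric axioms and left-invariance, use the coset dichotomy (equal or disjoint) for subgroups $V_i$ to see that $\mu(aV_i\triangle bV_i)$ depends only on $ab^{-1}$ (giving right-invariance), and then read off the formula from the characterization $\gamma\in V_i\iff \|(\lambda_1,\dots,\lambda_r)\|_p\le p^{-i}$. Your write-up is slightly more explicit in tracking the index $k$ at which the supremum is attained, but the argument is the same.
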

Let $N$  be the normalizer of $\Gamma$ in $G(\localfield)$.
\begin{lm}
\label{lem:normalizer}
If $H$ is an open subgroup of $\Gamma$ and $g\in G(\localfield)$ is such that 
$\con{g}{H}\leq \Gamma$, then $g\in N$.
\begin{proof}
Since $\Gamma$ is uniform, it is homeomorphic to $\Z_p^r$ and hence we may assume that
$H$ is topologically generated by $\gamma_1^{\eta},\dots, \gamma_r^{\eta}$ for some $\eta\in\Z$. 
Moreover, by hypothesis, $\con{g}{H}\leq \Gamma$; so for each $i \in\lbrace 1,\dots, r\rbrace$
\[
(\con{g}{\gamma_i})^{\eta} = \gamma_1^{\lambda_{i1}}\cdots \gamma_r^{\lambda_{ir}}, 
\] 
for some $\lambda_{i1},\dots,\lambda_{ir}\in\Z_p$.
The invariance under left- and right-translations of the metric $\rho$ implies that 
$\rho(1, \con{g}{(\gamma_i)^{\eta}} ) = \rho(1,\gamma_i^{\eta})$ so, by Lemma~\ref{lem:metric}, 
\[
	\vert \eta\rvert_p \allowbreak = \| (\lambda_{i1},\dots,\lambda_{ir})\|_p.
\] 
Hence 
$\eta^{-1}(\lambda_{i1},\dots,\lambda_{ir})\in \Z_p^{r}$ and therefore $\con{g}{\gamma_i} \in \Gamma$ because
the function associating an element of $\Gamma$ with its exponents for the generating set 
$\lbrace\gamma_1,\dots,\gamma_r\rbrace$ is a $\Z_p$-module isomorphism between the $\Z_p$-module $\Gamma$ 
and $\Z_p^r$.
\end{proof} 
\end{lm}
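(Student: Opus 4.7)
The plan is to prove that $g$ normalises $\Gamma$ by showing $\con{g}{\gamma_i}\in\Gamma$ for each element of a fixed topological generating set $\{\gamma_1,\dots,\gamma_r\}$ of the uniform pro-$p$ group $\Gamma$. Once $\con{g}{\Gamma}\subseteq\Gamma$ is established, unimodularity of $G(\localfield)$ (which holds because $G$ is semi-simple) gives $\mu(\con{g}{\Gamma})=\mu(\Gamma)$, so the two compact open sets coincide and $g\in N$. The main technical tool will be the metric $\rho$ of Lemma~\ref{lem:metric}: being both left- and right-translation invariant, it is automatically invariant under conjugation, which lets me move $g$ back and forth freely.

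First I would normalise the shape of $H$. By \cite[Theorem~4.9]{dixsauman1999analytic} the coordinate map $\Gamma\cong\Z_p^r$ is a homeomorphism, so every open subgroup of $\Gamma$ contains one of the form $\overline{\langle\gamma_1^{p^s},\dots,\gamma_r^{p^s}\rangle}$. Shrinking $H$ to such a subgroup (the hypothesis $\con{g}{H}\leq\Gamma$ is only strengthened), I may assume $H$ is topologically generated by $\gamma_1^\eta,\dots,\gamma_r^\eta$ for some $\eta=p^s$. Each $\con{g}{\gamma_i^\eta}$ then lies in $\Gamma$ and has a unique expression $\con{g}{\gamma_i^\eta}=\gamma_1^{\lambda_{i1}}\cdots\gamma_r^{\lambda_{ir}}$ with $\lambda_{ij}\in\Z_p$.

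Next I would extract a divisibility constraint on the $\lambda_{ij}$. Conjugation invariance of $\rho$ gives $\rho(1,\con{g}{\gamma_i^\eta})=\rho(1,\gamma_i^\eta)$, and the explicit formula of Lemma~\ref{lem:metric} collapses this into $\|(\lambda_{i1},\dots,\lambda_{ir})\|_p=|\eta|_p$. Hence $\mu_{ij}:=\eta^{-1}\lambda_{ij}\in\Z_p$ for every $i,j$. The element $w_i:=\gamma_1^{\mu_{i1}}\cdots\gamma_r^{\mu_{ir}}\in\Gamma$ then satisfies $w_i^\eta=\gamma_1^{\eta\mu_{i1}}\cdots\gamma_r^{\eta\mu_{ir}}=\con{g}{\gamma_i^\eta}=(\con{g}{\gamma_i})^\eta$, where the first equality uses the key uniform pro-$p$ identity $(\gamma_1^{\alpha_1}\cdots\gamma_r^{\alpha_r})^m=\gamma_1^{m\alpha_1}\cdots\gamma_r^{m\alpha_r}$ built into the $\Z_p$-module structure recalled in the preamble (see \cite[Section~4.3]{dixsauman1999analytic}).

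The main obstacle is to pass from the equality $w_i^\eta=(\con{g}{\gamma_i})^\eta$ in $G(\localfield)$ to $w_i=\con{g}{\gamma_i}$, because the $\eta$-th power map on the ambient group is generally far from injective. My resolution is to invoke the $\Z_p$-module structure directly: the coordinate map $\Gamma\cong\Z_p^r$ of \cite[Theorem~4.9]{dixsauman1999analytic} is a $\Z_p$-module isomorphism, so the unique element with coordinates $(\mu_{i1},\dots,\mu_{ir})$ is $w_i$, and multiplication by the non-zero $\eta$ is injective on the torsion-free module $\Z_p^r$; this injectivity carries over to $\eta$-th powers inside any uniform pro-$p$ open subgroup of $G(\localfield)$ in which both $w_i$ and $\con{g}{\gamma_i}$ can be seen (for instance a uniform pro-$p$ open subgroup of the compact open group $\overline{\langle\Gamma,\con{g}{\Gamma}\rangle}$). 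Equivalently, since $(\mu_{i1},\dots,\mu_{ir})\in\Z_p^r$ identifies via the module isomorphism with an actual element of $\Gamma$, there is no room for $\con{g}{\gamma_i}$ to differ from $w_i$. Thus $\con{g}{\gamma_i}=w_i\in\Gamma$; varying $i$ gives $\con{g}{\Gamma}\leq\Gamma$, and the Haar measure argument from the first paragraph upgrades this to $g\in N$.
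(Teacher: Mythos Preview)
Your approach matches the paper's almost line for line: reduce $H$ to $\overline{\langle\gamma_1^\eta,\dots,\gamma_r^\eta\rangle}$, write $(\con{g}{\gamma_i})^\eta$ in $\Gamma$-coordinates, use the bi-invariance of $\rho$ from Lemma~\ref{lem:metric} to force $\|(\lambda_{i1},\dots,\lambda_{ir})\|_p=|\eta|_p$, and then appeal to the $\Z_p$-module isomorphism $\Gamma\cong\Z_p^r$. The extra Haar-measure step upgrading $\con{g}{\Gamma}\subseteq\Gamma$ to $\con{g}{\Gamma}=\Gamma$ is a correct clarification that the paper leaves implicit.

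The place where your argument does not go through is precisely the step you single out as the ``main obstacle'': promoting $w_i^\eta=(\con{g}{\gamma_i})^\eta$ to $w_i=\con{g}{\gamma_i}$. You propose to work inside ``a uniform pro-$p$ open subgroup of the compact open group $\overline{\langle\Gamma,\con{g}{\Gamma}\rangle}$'', but two things fail. First, $\overline{\langle\Gamma,\con{g}{\Gamma}\rangle}$ has no reason to be compact: even when $\Gamma$ and $\con{g}{\Gamma}$ are commensurable compact open subgroups of a semi-simple $p$-adic group, the subgroup they generate can be unbounded (already for $\Gamma$ the first principal congruence subgroup of $\SL_2(\Z_p)$ and $g=\mathrm{diag}(p,p^{-1})$ one has $\con{g}{\Gamma}\not\subset\SL_2(\Z_p)$). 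Second, and independently, a uniform pro-$p$ \emph{open subgroup} of a larger compact group is a \emph{subgroup}, hence typically strictly smaller than both $\Gamma$ and $\con{g}{\Gamma}$; there is no reason it should contain the specific elements $w_i$ and $\con{g}{\gamma_i}$. What you would need is a uniform pro-$p$ group \emph{containing} both elements, and your construction runs in the wrong direction. The ``equivalently'' clause that follows is not an independent argument but a restatement of the desired conclusion. The paper at this juncture simply invokes the $\Z_p$-module isomorphism without elaborating on how one knows $\con{g}{\gamma_i}$ lands in $\Gamma$; your instinct that this deserves justification is sound, but the patch you offer does not work as written.
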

Lemma~\ref{lem:normalizer} shows that $g\in N$ whenever the conjugation by $g\in G(\localfield)$ sends  $H_x$ 
to a subgroup of $\Gamma$. This means exactly that for each $y \in X\cap \Gamma^{\times 2n}$, there is $h\in N$ such that 
$y = \con{h}{x}$, and hence that $X\cap \Gamma^{\times 2n} = \con{N}{x}$. The next lemma finishes the proof of 
Lemma~\ref{lemBDD}. Let 
$Z = Z(G(\localfield))$. Then $\Gamma Z\leq N$ because $\Gamma$ and $Z$ are both subgroups of $N$ and commute 
with each other. 
\begin{lm}
The fibres of $q$ have cardinality $[N:\Gamma Z]$.
\begin{proof}
We show that the set of $\Gamma$-orbits $(X \cap \Gamma^{\times 2n})/\Gamma$ is 
in bijection with the set of left $\Gamma Z$-cosets $N/\Gamma Z$. For $y\in X\cap \Gamma$, let $\orbit{\Gamma}{y}$ 
denote its $\Gamma$-orbit. We shall show that 
the following rule defines a bijective map:
	\[
		\varphi:\xymatrix@R=3pt{N/\Gamma Z	\ar[r] 			&(X \cap \Gamma^{\times 2n})/\Gamma\\
							h\Gamma Z	\ar@{|->}[r]	&\orbit{\Gamma}{h.x}.}
\]
We start by proving that $\varphi$ is well-defined. Let $h \in N$ and $\gamma z \in\Gamma Z$. Since $N$ normalizes
$\Gamma$, there exists $\gamma'\in \Gamma$ such that $h\gamma = \gamma' h$. Thus $\orbit{\Gamma}{\con{h}{x}} = 
\orbit{\Gamma}{\con{\gamma'h}{x}} = \orbit{\Gamma}{\con{h\gamma z}{x}}$ because $Z$ acts trivially on $\Gamma^{\times 2n}$.\par
The map $\varphi$ is surjective because $X\cap \Gamma = \con{N}{x}$ as Lemma~\ref{lem:normalizer} and
subsequent discussion show. We now show that $\varphi$ is injective. Take $h,h'\in N$ such that 
$\orbit{\Gamma}{\con{h}{x}}  = \orbit{\Gamma}{\con{h'}{x}}$. There is
$\gamma\in \Gamma$ such that 
	\[
		\gamma \con{h'}{x} = \con{h}{x}.
	\] 
Hence $h^{-1}\gamma h = z\in Z$ because by the proof of \cite[Lemma~4.8~(3)]{AA} the stabilizer of $x$ for the 
$G(\localfield)$-action on $R(n,G)^{o}(K)$ is $Z$. It follows that $\gamma h' = h z = z h$ and so $h^{-1}h' = \gamma^{-1} z\in \Gamma Z$, 
showing that $\varphi$ is injective.
\end{proof}
\end{lm}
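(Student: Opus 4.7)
The plan is to construct an explicit bijection between $N/\Gamma Z$ and the fibre of $q$ over $Q(x)$, which is $(X\cap \Gamma^{\times 2n})/\Gamma$. By Lemma~\ref{lem:normalizer} together with the observation preceding it, the condition $\con{g}{x}\in \Gamma^{\times 2n}$ forces $g\in N$, and conversely every $g\in N$ satisfies it. Hence $X\cap \Gamma^{\times 2n}=\con{N}{x}$, so enumerating the fibre amounts to counting $\Gamma$-orbits in the $N$-orbit of $x$.

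The natural candidate is the map
\[
\varphi: N/\Gamma Z \longrightarrow (X\cap \Gamma^{\times 2n})/\Gamma, \qquad h\Gamma Z\longmapsto \orbit{\Gamma}{\con{h}{x}}.
\]
To show that $\varphi$ is well defined, I would use two facts: first, that $N$ normalizes $\Gamma$, so given $\gamma\in\Gamma$ and $h\in N$ there exists $\gamma'\in\Gamma$ with $h\gamma=\gamma'h$; second, that $Z=Z(G(K))$ acts trivially on $G(K)^{\times 2n}$ by conjugation. Combining these two facts, replacing a representative $h$ by $h\gamma z$ with $\gamma z\in \Gamma Z$ modifies $\con{h}{x}$ only within its $\Gamma$-orbit, so $\varphi$ is insensitive to the choice of coset representative.

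Surjectivity of $\varphi$ is immediate from $X\cap \Gamma^{\times 2n}=\con{N}{x}$. For injectivity, I would take $h,h'\in N$ with $\orbit{\Gamma}{\con{h}{x}}=\orbit{\Gamma}{\con{h'}{x}}$, which gives some $\gamma\in\Gamma$ satisfying $\gamma\cdot \con{h'}{x}=\con{h}{x}$. Rearranging, $h^{-1}\gamma h'$ lies in the $G(K)$-stabilizer of $x$ for the conjugation action on $R(n,G)^{o}(K)$. Here I invoke the key input from \cite[Lemma~4.8~(3)]{AA}: this stabilizer is exactly $Z$, because the entries of $x$ topologically generate a subgroup whose Zariski closure is all of $G$. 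Therefore $h^{-1}\gamma h'\in Z$, so $h^{-1}h'\in \gamma^{-1}Z\subseteq \Gamma Z$, and $\varphi$ is injective.

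The only non-formal ingredient is the identification of the $G(K)$-stabilizer of a point of $R(n,G)^{o}(K)$ with the center $Z$, and this is precisely what \cite[Lemma~4.8~(3)]{AA} supplies by using the Zariski density built into the definition of $R(n,G)^{o}$. Once that point is accepted, the rest of the argument is a direct verification that $\varphi$ is a well-defined bijection, whence the fibre has cardinality $[N:\Gamma Z]$.
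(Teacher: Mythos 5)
Your proof is correct and follows essentially the same route as the paper: the same map $N/\Gamma Z \to (X\cap\Gamma^{\times 2n})/\Gamma$, well-definedness from $N$ normalizing $\Gamma$ and $Z$ acting trivially by conjugation, surjectivity from identifying $X\cap\Gamma^{\times 2n}$ with the $N$-orbit of $x$ via Lemma~\ref{lem:normalizer}, and injectivity from the stabilizer of $x$ being $Z$ by \cite[Lemma~4.8~(3)]{AA}. One cosmetic fix: from $h^{-1}\gamma h'=z\in Z$ one gets $h^{-1}h'=(h^{-1}\gamma^{-1}h)z\in\Gamma Z$ because $h$ normalizes $\Gamma$, not $h^{-1}h'\in\gamma^{-1}Z$ as written; the conclusion is unaffected.
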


\newpage

\begin{center}
{\bf Erratum: ``On representation zeta functions of special linear groups"}\\
Nero Budur and Michele Zordan
\end{center}

\bigskip

The proof of Theorem 2.2 of the article [BZ] contains a gap, as it was brought to our attention by a referee. More precisely Lemma 4.1 in the $p$-adic group theoretic argument is faulty. The fault was generated in line 4 of the proof of Lemma 4.3, and it propagated to the subsequent lemmas. Unfortunately this invalidates the proof of all the results claimed in the Introduction. We apologize for this mistake and thank the referee for pointing it out to us.

In the new preprint [B], an entirely geometric proof of Corollary 1.3 of [BZ] is given. This implies that all the claims in the Introduction of [BZ] are indeed still true, except Proposition 1.5 which remains open.

\section*{References}

\noindent [B] \;\,N. Budur, Rational singularities, quiver moment maps, and representations of surface groups. arXiv:1809.05180.

\medskip
\noindent[BZ]\; N. Budur, M. Zordan, On representation zeta functions of special linear groups, Int. Math. Res. Notices (2018), doi:10.1093/imrn/rny057.

\end{document}